\numberwithin{equation}{section}
\theoremstyle{plain}
\newtheorem{thm}{Theorem}[section]
\newtheorem{lem}[thm]{Lemma}
\newtheorem{prop}[thm]{Proposition}
\newtheorem{cor}[thm]{Corollary}
\newtheorem*{hyp}{Working Hypothesis}
\newtheorem{hyp1}{Working Hypothesis}
\newcommand{\Mp}{\widetilde{\Sp}}
\theoremstyle{definition}
\newtheorem{rem}[thm]{Remark}
\newcommand{\cl}[1]{\widetilde{#1}}
\newcommand{\pair}[1]{\langle #1 \rangle}
\newcommand{\tr}{\mathrm{tr}}
\def\cha{\operatorname{char}}
\def\Hom{\operatorname{Hom}}
\def\ind{\operatorname{ind}}
\def\Ind{\operatorname{Ind}}
\def\Irr{\operatorname{Irr}}
\def\Re{\operatorname{Re}}
\def\unit{\operatorname{unit}}
\def\temp{\operatorname{temp}}
\def\supc{\operatorname{supc}}
\def\gen{\operatorname{gen}}
\def\beq{\begin{equation}}
\def\eeq{\end{equation}}
\def\beqn{\begin{equation*}}
\def\eeqn{\end{equation*}}
\def\beqna{\begin{eqnarray}}
\def\eeqna{\end{eqnarray}}
\def\beqnan{\begin{eqnarray*}}
\def\eeqnan{\end{eqnarray*}}
\def\wt{\widetilde}
\def\rm{\mathrm}
\def\mc{\mathcal}
\def\mf{\mathfrak}
\def\bs{\backslash}
\def\GL{\mathrm{GL}}
\def\O{\mathrm{O}}
\def\SO{\mathrm{SO}}
\def\Sp{\mathrm{Sp}}
\def\G{\mathrm{G}}
\def\H{\mathrm{H}}
\def\J{\mathrm{J}}
\def\U{\mathrm{U}}
\def\V{\mathrm{V}}
\def\P{\mathrm{P}}
\def\B{\mathrm{B}}
\def\N{\mathrm{N}}
\def\M{\mathrm{M}}
\def\T{\mathrm{T}}
\def\Q{\mathrm{Q}}
\def\A{\mathbb{A}}
\def\Z{\mathrm{Z}}
\def\e{\epsilon}
\def\AA{\mathbb{A}}
\def\CC{\mathbb{C}}
\def\FF{\mathbb{F}}
\def\VV{\mathbb{V}}
\def\WW{\mathbb{W}}
\def\RR{\mathbb{R}}
\def\QQ{\mathbb{Q}}
\def\ss{\subset}
\def\la{\langle}
\def\ra{\rangle}
\def\bs{\backslash}
\title{The local converse theorem for $\Mp_{2n}$}
\author{Jaeho Haan}
\subjclass[2020]{11F70, 22E50}
\keywords{gamma factors, metaplectic groups, local converse theorem, theta correspondence}
\address{Department of Mathematical Sciences, KAIST, 291 Daehak-ro, Yuseong-gu, Daejeon, 34141, South Korea}
\email{jaehohaan@gmail.com}
\begin{document}

\maketitle

\begin{abstract}In this paper, we establish the local converse theorem and the stability of local gamma factors for $\Mp_{2n}$ via the precise local theta correspondence between $\Mp_{2n}$ and $\SO_{2n+1}$ over local fields of characteristic not equal to 2. We also prove the rigidity theorem for irreducible generic cuspidal automorphic representations of $\Mp_{2n}$ over number fields.
\end{abstract}

\section{\textbf{Introduction}} 

Let \(F\) be a non-archimedean local field of characteristic not equal to 2 (i.e., a finite extension of \(\QQ_p\) for any odd prime \(p\) or \(\FF_q((t))\) for  \(q = p^k\)), and let \(W\) be a \(2n\)-dimensional symplectic space over \(F\). Define \(\wt{\Sp}_{2n} = \Mp(W)\) as the metaplectic group of \(W\), which is the double covering group of \( \Sp(W)\), the isometry group of \(W\).

We introduce \(\psi\), a non-trivial additive character of \(F\). For \(\lambda \in F^{\times}\), we define the character \(\psi_{\lambda}\) of \(F\) as \(\psi_{\lambda}(x) \coloneqq \psi(\lambda x)\). Let \(\U'\) be the unipotent radical of a Borel subgroup of \(\Sp(W)\), and let \(\mu_{\lambda}'\) be a certain non-trivial character of \(\U'(F)\) that depends on \(\psi\) and \(\lambda\) (see Section \ref{gen} for its definition). 
In this paper, we prove the following local converse theorem for quasi-split metaplectic groups under the following hypothesis in $\cha(F)=p$ case. (There is no hypothesis in $\cha(F)=0$ case.)

\begin{hyp}
The $\gamma$-factors for generic representations of $\wt{\Sp}_{2n}  \times \GL_l$ are properly defined in $\cha(F)=p$ cases. Furthermore, they satisfy natural properties of $\gamma$-factors. (For the precision of the natural properties, see (i)-(v) in Proposition~\ref{gamma} )
\end{hyp}

\begin{thm}[Local Converse Theorem for $\wt{\Sp}_{2n}$] \label{main} Assume the above working hypothesis on $\cha(F)=p$ case. Let $\wt{\pi}$ and $\wt{\pi}'$ be irreducible admissible $\mu_{\lambda}'$-generic representations of $\wt{\Sp}_{2n}(F)$, satisfying the condition
\[\gamma(s,\wt{\pi} \times \sigma,\psi_{\lambda})=\gamma(s,\wt{\pi}' \times \sigma,\psi_{\lambda})
\] holds for all irreducible supercuspidal representation $\sigma$ of $\GL_i(F)$ with $1\le i \le n$. Then, it follows that
\[\wt{\pi} \simeq \wt{\pi}'.
\]
\end{thm}
\noindent 
Here, $\gamma$ denotes the local gamma factor (see Section 2.3 for a detailed explanation). The local converse theorem (LCT) has been studied in various versions concerning different groups. We provide a brief overview of some notable works related to this theorem.

The initial investigation of these types of problems can be attributed to Henniart \cite{He}, who established a (weak) version of the theorem for general linear groups. Jiang and Soudry \cite{JS03}, building upon Henniart's work, proved a (weak) LCT for odd special orthogonal groups $\SO_{2n+1}$. They combined the global weak functorial lift from $\SO_{2n+1}$ to $\GL_{2n}$, the local descent from $\GL_{2n}$ to $\Mp_{2n}$, and the Howe lifts from $\SO_{2n+1}$ to $\Mp_{2n}$. Their proof involved a global-to-local argument that employed Henniart's result. Subsequently, D. Jiang \cite{Jng} proposed a version of the LCT for all classical groups.

Around 2017, Chai \cite{Cha19} and Jacquet and Liu \cite{JL18} independently refined Henniart's result to its optimal form. More recently, P. Yan and Q. Zhang \cite{YZ23} achieved the same result through a different approach.

Recently, Q. Zhang \cite{Q18, Q19} proved the supercuspidal cases of the conjecture for $\Sp_{2n}$ and $\U_{2n+1}$ using the theory of partial Bessel functions developed by Cogdell, Shahidi, and Tsai in \cite{CST17}. In a similar vein, B. Liu and A. Hazeltine \cite{LH22} established the conjecture for split even special orthogonal groups, while Y. Jo \cite{Jo} refined Jiang and Soudry's \cite{JS03} result and extended Zhang's \cite{Q18} result to the generic cases. Notably, Jo's work addressed local fields of positive characteristics as well as characteristic zero, in contrast to other LCT results that only dealt with characteristic zero cases. These proofs were purely local and did not invoke the weak Langlands functorial lifts to general linear groups, as in the recent results of \cite{Cha19} and \cite{JL18}. For other classical groups, Morimoto \cite{M} demonstrated the theorem for even unitary groups by pulling back the recent results of \cite{Cha19} and \cite{JL18} using global (and hence) local descent methods.

With regards to the LCT for metaplectic groups, it appears plausible to apply the local descent methods by pulling back the LCT for general linear groups (\cite{Cha19} and \cite{JL18}). Alternatively, the theory of partial Bessel functions \cite{CST17} used in \cite{Q18, Q19, LH22} and \cite{Jo} can be employed. Both approaches would require significant effort. In this paper, we propose an alternative approach using the Local Theta Correspondence (LTC) between $\SO_{2n+1}$ and $\Mp_{2n}$, which builds on the ideas of Jiang–Soudry \cite{JS03}. The LTC allows for the transfer of problems between different groups, similar to how Gan and Ichino used it to prove the local Gan-Gross-Prasad conjecture for skew-hermitian unitary groups (\cite{GI2}) and the endoscopic classification of automorphic representations of metaplectic groups (\cite{GI3}.) Inspired by these works, we aim to transfer the LCT for $\SO_{2n+1}$ to $\Mp_{2n}$ using the LTC. 

To successfully transfer the LCT, it is necessary to demonstrate that the LTC preserves certain properties and invariants of representations. Specifically, we consider genericity (Theorem~\ref{Theta1}) and $\gamma$-factors (Theorem~\ref{Theta}). Indeed, these results can be found in \cite[Theorem~9.1]{GS} and \cite[Proposition~11.1, Corollary~11.2]{GS}, respectively.

However, unfortunately, the proof of \cite[Theorem~9.1]{GS} is absent in the cited reference. To address this gap, we provide a complete proof demonstrating that the missing argument holds even in the case when $\cha(F)=p$. Additionally, \cite[Proposition~11.1]{GS} relies on the general $\gamma$-factor theory, which is currently unavailable. Consequently, we furnish a comprehensive proof for this proposition using only the established theory on generic $\gamma$-factor  (see Remark~\ref{err}).

Theorem~\ref{Theta1} and Theorem~\ref{Theta} form the main part of the paper and are discussed in Section 3, along with Section 4, which computes the twisted Jacquet module of the Weil representation—a crucial step in proving the preservation of genericity.

In summary, while Jiang–Soudry already demonstrated the power of the LTC in the proof of the LCT for $\SO_{2n+1}$, our contribution is to adapt and extend this approach to the metaplectic group $\Mp_{2n}$. This approach illustrates how the LTC can be used to transfer the LCT between different groups. Notably, a similar method could be applied to extend the recent result on the LCT for split $\SO_{2n}$ to quasi-split $\SO_{2n}$. However, due to potential complications in notation and subtle differences in the proof and result, we chose to address this extension separately in \cite{HKK23}. It is worth mentioning that, while the main theorem in \cite{HKK23} holds in the case $\mathrm{char}(F) = p$ under assumptions regarding the general theory of $\gamma$-factors (including non-generic cases), the main theorem in this paper holds in the case $\mathrm{char}(F) = p$ under assumptions restricted to the theory of generic $\gamma$-factors.

It's worth noting that in the case where the local field $F$ has characteristic 0, the main theorem (Theorem~\ref{main}) can be readily derived by using Gan-Ichino and Arthur's results concerning the local Langlands correspondence for $\Mp_{2n}$ (\cite{GI3}, \cite{Ar13}), in conjunction with the LCT for $\GL_{2n}$ and the uniqueness of generic members in $L$-packets (\cite{At}). However, we stress that our approach is entirely independent of Arthur's results. We believe that a proof not relying on Arthur's results holds its intrinsic value.

In addition to the main theorem (Theorem~\ref{main}), the paper also proves the following stability of local gamma factors for $\Mp_{2n}$ as a byproduct of the proof. 

\begin{thm} [Stability of local $\gamma$-factors for $\Mp_{2n}$]\label{stab}
Let $\wt{\pi}$ and $\wt{\pi}'$ be irreducible admissible $\mu_{\lambda}'$-generic representations of $\Mp_{2n}(F)$. Then there exists $l=l(\wt{\pi},\wt{\pi}')$ such that for any quasi-character $\chi$ of $F^{\times}$, with conductor $\text{cond}(\chi)>l$, we have
\[ \gamma(s,\wt{\pi} \times \chi,\psi_{\lambda})=\gamma(s,\wt{\pi}' \times \chi,\psi_{\lambda}).\]
\end{thm}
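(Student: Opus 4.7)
The plan is to deduce Theorem~\ref{stab} from the corresponding stability statement on the $\SO_{2n+1}$-side by transferring through the local theta correspondence, in complete parallel with the strategy used for Theorem~\ref{main}. The two key technical inputs on the theta side are already in place: Theorem~\ref{Theta1} (preservation of genericity) and Theorem~\ref{Theta} (preservation of $\gamma$-factors). The remaining ingredient is the known stability of $\gamma$-factors for irreducible generic representations of $\SO_{2n+1}$ twisted by highly ramified characters of $\GL_1$, due to Cogdell--Piatetski-Shapiro--Shahidi in characteristic zero and extended to positive characteristic via the Langlands--Shahidi machinery adapted by Lomel\'i.

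Given $\wt{\pi}$ and $\wt{\pi}'$ as in the statement, I would first form their small theta lifts $\sigma = \theta_{\psi_\lambda}(\wt{\pi})$ and $\sigma' = \theta_{\psi_\lambda}(\wt{\pi}')$ from $\Mp_{2n}$ to $\SO_{2n+1}$. By Theorem~\ref{Theta1} both $\sigma$ and $\sigma'$ are irreducible admissible generic representations of $\SO_{2n+1}(F)$ with respect to a Whittaker datum canonically determined by $\psi$, and by Theorem~\ref{Theta} the preservation identity
$$\gamma(s, \wt{\pi} \times \chi, \psi_\lambda) = \gamma(s, \sigma \times \chi, \psi_\lambda)$$
holds for every quasi-character $\chi$ of $F^\times$, together with the analogous identity for $\wt{\pi}'$ and $\sigma'$. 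Any elementary local factors arising in the precise form of the preservation identity appear symmetrically on both sides and will cancel in what follows.

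Invoking the cited stability result for $\SO_{2n+1}$, there exists an integer $l = l(\sigma, \sigma')$ such that whenever $\text{cond}(\chi) > l$,
$$\gamma(s, \sigma \times \chi, \psi_\lambda) = \gamma(s, \sigma' \times \chi, \psi_\lambda).$$
Combining these identities then yields the equality asserted in Theorem~\ref{stab} for the same constant $l$, now interpreted as $l(\wt{\pi},\wt{\pi}')$ via the injectivity of the theta correspondence on the relevant generic representations.

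The main obstacle in this plan is ensuring that the stability input on the $\SO_{2n+1}$ side is available in the full generality required by our setting — notably for local fields of positive characteristic. This can be handled either by appealing to Lomel\'i's extension of the Langlands--Shahidi method and the attendant stability arguments, or alternatively by running the original Cogdell--Piatetski-Shapiro--Shahidi argument directly within the local coefficient framework underlying the generic $\gamma$-factor theory recalled in Section~2.3; both routes are now available. Once this classical input is granted, the transfer through $\theta_{\psi_\lambda}$ is essentially formal, and the stability statement for $\Mp_{2n}$ follows with no additional analytic input beyond what has already been developed in the earlier sections.
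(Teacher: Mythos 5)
Your overall strategy---transferring stability from $\SO_{2n+1}$ through the theta correspondence and quoting the Cogdell--Piatetski-Shapiro/Lomel\'i stability input on the orthogonal side---is the same as the paper's, and the classical input you cite is exactly what the paper invokes. But there is a genuine gap in the way you launch the transfer. Theorem~\ref{Theta1} is stated and proved only for \emph{tempered} $\mu_1'$-generic representations, and Theorem~\ref{Theta} takes as a \emph{hypothesis} that $\Theta_{\psi}(\wt{\pi})$ is generic. The representations $\wt{\pi},\wt{\pi}'$ in Theorem~\ref{stab} are arbitrary irreducible admissible generic representations, so nothing in the paper guarantees that your $\sigma=\theta_{\psi_\lambda}(\wt{\pi})$ is generic; for non-tempered generic representations the small theta lift to the equal-rank orthogonal group need not be generic, and without genericity neither Theorem~\ref{Theta} nor the $\SO_{2n+1}$ stability theorem can be applied to it. Your first step therefore does not go through as written.

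The paper avoids this by never lifting $\wt{\pi}$ itself. It writes $\wt{\pi}$ as the generic constituent of an induced representation with support $(\tau_1,\dots,\tau_r;\wt{\pi}_0)$ and exponents $(s_1,\dots,s_r)$, where the $\tau_i$ are supercuspidal representations of general linear groups and $\wt{\pi}_0$ is tempered, and then constructs by hand a generic representation $\pi$ of $\H_n^{+}(F)$ with support $(\tau_1,\dots,\tau_r;\Theta_{\psi}(\wt{\pi}_0))$ and the same exponents (similarly for $\wt{\pi}'$). Only the tempered anchor $\wt{\pi}_0$ is passed through the theta correspondence, where Theorems~\ref{Theta1} and \ref{Theta} do apply; the multiplicativity of $\gamma$-factors (Proposition~\ref{gamma}(ii)) then gives $\gamma(s,\wt{\pi}\times\chi,\psi)=\gamma(s,\pi\times\chi,\psi)$ because the $\GL$-factors on the two sides are literally identical. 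Stability for $\SO_{2n+1}$ is applied to $\pi$ and $\pi'$, and the conclusion follows. To repair your argument, replace ``take the theta lift of $\wt{\pi}$'' with this support-level construction; the remainder of your reasoning then goes through essentially unchanged.
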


When the residual characteristic of the $p$-adic field $F$ is odd, Zhang (\cite{Q17}) directly proved this result under the condition that $\wt{\pi}$ and $\wt{\pi}'$ share the same central characters. Our result covers cases where the residual characteristic of $F$ is 2 without requiring the assumption of identical central characters. Alternatively, we can remove the assumption on the same central characters from the local functional equation of doubling zeta integrals of metaplectic groups (see \cite[Section~4.3]{GS}).

The paper also presents the rigidity theorem for $\Mp_{2n}$ over a global number field. 

\begin{thm} [Rigidity theorem for $\Mp_{2n}$] \label{rigid}
Let $K$ be a global number field and $\AA$ its ad\`{e}le ring. Let $\mf{U}'$ be a non-trivial generic character of $\U'(K)\bs \U'(\A)$ and $\wt{\pi}_1=\otimes_v \wt{\pi}_{1,v}$ and $\wt{\pi}_2=\otimes_v \wt{\pi}_{2,v}$ be irreducible cuspidal $\mf{U}'$-generic automorphic representations of $\Mp_{2n}(\A)$. If $\wt{\pi}_{1,v}\simeq \wt{\pi}_{2,v}$ for almost all places $v$ of $K$, then $\wt{\pi}_{1,v} \simeq \wt{\pi}_{2,v}$ for all places $v$ of $K$.
\end{thm}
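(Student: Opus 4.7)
The plan is to reduce the global rigidity statement to a local comparison of $\gamma$-factors via the Local Converse Theorem (Theorem~\ref{main}), and then use the global functional equation together with the stability result of Theorem~\ref{stab} to obtain the required local equalities. Let $S$ be a finite set of places of $K$ containing the archimedean places and every place at which $\wt{\pi_v}$ or $\wt{\pi_v}'$ is ramified, chosen large enough that $\wt{\pi_v}\simeq\wt{\pi_v}'$ for all $v \notin S$. Since a restricted tensor product is determined up to isomorphism by its local factors, it suffices to prove $\wt{\pi_{v_0}} \simeq \wt{\pi_{v_0}}'$ for each $v_0 \in S$; by Theorem~\ref{main}, this amounts to establishing
\[
\gamma(s, \wt{\pi_{v_0}} \times \tau, \psi_{\lambda, v_0}) = \gamma(s, \wt{\pi_{v_0}}' \times \tau, \psi_{\lambda, v_0})
\]
for every irreducible supercuspidal $\tau$ of $\GL_i(F_{v_0})$ with $1 \le i \le n$.

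Fix such a $\tau$. Using a standard globalization (e.g., a Shahidi-type construction, which allows one to prescribe the local component at a chosen place), produce an irreducible cuspidal automorphic representation $\sigma$ of $\GL_i(\AA)$ with $\sigma_{v_0} \simeq \tau$. The global functional equation of the doubling (or Rankin--Selberg) $\gamma$-factors for $\Mp_{2n} \times \GL_i$, available from \cite{GS} and equivalently obtainable from $\SO_{2n+1} \times \GL_i$ via the preservation of $\gamma$-factors under the local theta correspondence (Theorem~\ref{Theta}), gives
\[
\prod_v \gamma(s, \wt{\pi_v} \times \sigma_v, \psi_{\lambda, v}) = 1 = \prod_v \gamma(s, \wt{\pi_v}' \times \sigma_v, \psi_{\lambda, v}).
\]
Since $\wt{\pi_v} \simeq \wt{\pi_v}'$ for $v \notin S$, the corresponding local factors on the two sides cancel, yielding
\[
\prod_{v \in S} \gamma(s, \wt{\pi_v} \times \sigma_v, \psi_{\lambda, v}) = \prod_{v \in S} \gamma(s, \wt{\pi_v}' \times \sigma_v, \psi_{\lambda, v}). \quad (\star)
\]

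To isolate the place $v_0$ in $(\star)$, I would further twist $\sigma$ by a Hecke character $\xi$ of $K^\times \bs \AA^\times$, chosen (using the standard existence of characters of the idele class group with prescribed local components at finitely many places) so that $\xi_{v_0}$ is trivial while each $\xi_v$ for $v \in S \setminus \{v_0\}$ is sufficiently highly ramified. Applying $(\star)$ with $\sigma \otimes \xi$ in place of $\sigma$, the local factors at $v \in S \setminus \{v_0\}$ coincide between the two sides and cancel, leaving exactly the desired equality at $v_0$, whence Theorem~\ref{main} yields $\wt{\pi_{v_0}} \simeq \wt{\pi_{v_0}}'$ and hence $\wt{\pi} \simeq \wt{\pi}'$. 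The main obstacle is this final cancellation: Theorem~\ref{stab} is formulated for $\GL_1$-twists of $\wt{\pi_v}$, whereas $(\star)$ involves $\GL_i$-twists. Bridging this gap requires combining Theorem~\ref{stab} with the multiplicativity of local $\gamma$-factors under parabolic induction, so as to reduce $\gamma(s, \wt{\pi_v} \times (\sigma_v \otimes \xi_v), \psi_v)$ for highly ramified $\xi_v$ to a product of $\GL_1$-stable factors that depend only on the central character and thus cancel between $\wt{\pi_v}$ and $\wt{\pi_v}'$.
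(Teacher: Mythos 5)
Your route is genuinely different from the paper's. The paper does not run a global functional-equation argument directly on $\Mp_{2n}\times\GL_i$; instead it takes weak functorial lifts $\Pi,\Pi'$ of $\wt{\pi},\wt{\pi}'$ to $\GL_{2n}(\AA)$ (via \cite{GRS}), invokes strong multiplicity one for $\GL_{2n}$ to get $\Pi\simeq\Pi'$ everywhere, handles the archimedean places by the archimedean local Langlands correspondence, and then at each finite place uses Lemma~\ref{cor4} (a CKPSS-style consequence of the stability Theorem~\ref{stab}) to convert $\Pi_v\simeq\Pi_v'$ into the equality of twisted $\gamma$-factors needed for Theorem~\ref{main}. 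Your proposal would, if completed, be more self-contained (no appeal to the descent lift or to strong multiplicity one), and it is essentially the CKPSS/Jiang--Soudry ``isolate one place by highly ramified twists'' technique transplanted to $\Mp_{2n}$.

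However, there are genuine gaps. First, the archimedean places. Your set $S$ contains them, and the hypothesis $\wt{\pi_v}\simeq\wt{\pi_v}'$ is only given for almost all $v$, so archimedean places may be ``bad.'' Theorem~\ref{main} is a statement over non-archimedean fields, so your reduction ``it suffices to prove $\wt{\pi_{v_0}}\simeq\wt{\pi_{v_0}}'$ for each $v_0\in S$ via the LCT'' does not apply when $v_0$ is archimedean; and when $v_0$ is finite, the cancellation of the factors at archimedean $v\in S\setminus\{v_0\}$ in $(\star)$ cannot be achieved by stability, since Theorem~\ref{stab} (and the highly-ramified-twist mechanism generally) is non-archimedean. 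This is precisely the point where the paper needs the lift to $\GL_{2n}$ and the archimedean LLC, and your argument has no substitute for it. Second, your proposed bridge from $\GL_1$-stability to $\GL_i$-stability via multiplicativity does not work as stated: if $\sigma_v$ is supercuspidal on $\GL_i$ with $i\ge 2$, then $\sigma_v\otimes\xi_v$ is still supercuspidal and Proposition~\ref{gamma}(ii) gives no decomposition of $\gamma(s,\wt{\pi_v}\times(\sigma_v\otimes\xi_v),\psi_v)$ into $\GL_1$-factors. To make this step work you must build into the globalization that $\sigma_v$ is unramified (hence a constituent of a principal series) at every finite $v\in S\setminus\{v_0\}$ --- which \cite{PS} and \cite{GL} do permit --- and only then apply multiplicativity together with Theorem~\ref{stab}. (Minor point: the identity $\prod_v\gamma(s,\wt{\pi_v}\times\sigma_v,\psi_v)=1$ is not the form of the functional equation in Proposition~\ref{gamma}(v); the correct derivation of $(\star)$ goes through the cancellation of the partial $L$-functions outside $S$, which does hold here.) In positive characteristic, where there are no archimedean places, your strategy could plausibly be carried through; over a number field it is incomplete without the archimedean input the paper obtains from the weak lift.
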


The rigidity theorem states that if two irreducible cuspidal $\mf{U}'$-generic automorphic representations $\wt{\pi}_1$ and $\wt{\pi}_2$ of $\Mp_{2n}(\mathbb{A})$ are equivalent at almost all places, then $\wt{\pi}_1$ is equivalent to $\wt{\pi}_2$.

The paper is organized as follows:

In Section 2, we provide the necessary background and set up for the precise formulation of the theorems. 
We also reduce Theorem~\ref{main} to its weak version, which is stated as Theorem~\ref{main1}.
In Section 3, we explain the LTC and review some of its known properties. We then proceed to prove that the LTC preserves genericity and gamma factors between $\Mp_{2n}$ and $\SO_{2n+1}$. This is a key component of the results presented in the paper.
In Section 4, we compute the twisted Jacquet module of the Weil representation, which plays a crucial role in proving the preservation of genericity in Section 3.
In Section 5, we provide a proof of Theorem~\ref{main} and Theorem~\ref{stab}.
In Section 6, we prove the rigidity theorem for $\Mp_{2n}$.

\subsection{Notations}
\begin{itemize}
\item $F$ : a non-archimedean local field of characteristic different from 2
\item $\cha(F)$ : the characteristic of $F$
\item $q$ : the order of the residue field of $F$ such that $q=p^e$ for some odd prime $p$
\item $\psi$ : a non-trivial additive character of $F$
\item $\varpi$ : a uniformizer of $F$
\item $|\cdot|_F$ : a normalized absolute value in $F$ such that $|\varpi|=q^{-1}$
\item $\Irr(G)$ : the set of isomorphism classes of irreducible representations of $G$, where $G:=\G(F)$ is a $F$-points of a reductive group $\G$ defined over $F$.
\item $\Irr_{\temp}(G)$ : the set of isomorphism classes of irreducible tempered representations of $G$
\item $\Irr_{\supc}(G)$ : the set of isomorphism classes of irreducible supercuspidal representations of $G$
\item $\Irr_{\unit}(G)$ : the set of isomorphism classes of irreducible unitary representations of $G$

\item $\pi^{\vee}$ : the contragredient representation of $\pi \in \Irr(G)$
\item $V_m$ : a $2m+1$-dimensional orthogonal space, i.e., a $2m+1$-dimensional vector space over $F$ equipped with a non-degenerate symmetric bilinear form
\item $\H_m:=\SO(V_m)$ : the special orthogonal group of $V_m$ associated to a non-degenerate symmetric bilinear form $( \ , \ )_{V_m}$
\item $W_n$ : a $2n$-dimensional symplectic space over $F$, i.e., a $2n$-dimensional vector space over $F$ equipped with a non-degenerate symplectic bilinear form $\la \ , \ \ra_{W_n}$

\item $\J_n:=\Sp(W_n)$ : the symplectic group of $W_n$
\item $\wt{\J_n}:=\Mp(W_n)$ : the metaplectic group of $W_n$

\item $\mathbb{I}$ : the trivial representation 
\item $\ind_{B}^G$ : the unnormalized compactly supported induction for an algebraic group $G$ and its closed subgroup $B$
\item $\mc{S}(X)$ : the space of locally constant function of compact support on a set $X$

\end{itemize}
\newpage
\section{Preliminaries}
In this section, we prepare the basic setup to state our main theorem. While many of the theorems and propositions we quote in this paper only address the case where $\cha(F)=0$, most of the proofs apply equally to the positive characteristic cases. Therefore, instead of repeating the same proof in the references for the case $\cha(F)=0$, we adopt the position that they also cover the case $\cha(F)=p$ when it is applicable. However, when the proof of a proposition or a theorem for $\cha(F)=0$ is not clearly stated in the references, we provide a proof that covers both cases. Furthermore, when the proofs for $\cha(F)=0$ and $
\cha(F)=p$ are a bit different, we mention such differences in the proof.
\subsection{Metaplectic and orthogonal groups}
\subsubsection{Symplectic groups}
Let \(W_n\) be a \(2n\)-dimensional vector space over the field \(F\) equipped with a non-degenerate symplectic form denoted by \(\langle \cdot, \cdot \rangle_{W_n}\). We denote the associated isometric group as \(\Sp(W_n)\).

We choose a specific basis \(\{f_1,\cdots,f_n,f_n^*,\cdots,f_1^*\}\) for \(W_n\) such that the following relations hold:
\[
\langle f_i,f_j \rangle_{W_n} = \langle f_i^*,f_j^* \rangle_{W_n} = 0, \quad \langle f_i,f_j^* \rangle_{W_n} = \delta_{ij},
\]
where \(\delta_{ij}\) is the Kronecker delta.

For \(1 \leq k \leq n\), we define
\[
X_k = \text{Span}\{f_1,\cdots,f_k\} \quad \text{and} \quad X_k^* = \text{Span}\{f_1^*,\cdots,f_k^*\}.
\]
This gives us \(W_n = X_n \oplus X_n^*\).

We also define
\[
W_{n,k} = \text{Span}\{f_{k+1},\cdots,f_n,f_n^*,\cdots,f_{k+1}^*\},
\]
so that \(W_n = X_k \oplus W_{n,k} \oplus X_k^*\).

Next, we consider the flag of isotropic subspaces
\[
X_{k_1} \subset X_{k_1+k_2} \subset \cdots \subset X_{k_1+\cdots +k_r} \subset W_n.
\]
The stabilizer of such a flag is a parabolic subgroup \(\P'\) of \(\Sp(W_n)\) whose Levi factor \(\M'\) is isomorphic to
\[
\M' \simeq \GL_{k_1} \times \cdots \times \GL_{k_r} \times \Sp(W_{n,k_1+\cdots + k_r}),
\]
where each \(\GL_{k_i}\) is the group of invertible linear maps on \(\text{Span}\{f_{\sum_{t=1}^{i-1}k_t+1},\cdots,f_{\sum_{t=1}^{i}k_t}\}\).

\subsubsection{Metaplectic groups}
For a symplectic space $W_n$, metaplectic group $\Mp(W_n)$ is a two-fold cover of $\Sp(W_n)$ defined by
\[
\Mp(W_n)=\Sp(W_n)\times \{\pm{1}\}
\] with group law given by
\[
(g_1,\e_1) \times (g_2,\e_2)=(g_1g_2,\e_1\e_2\cdot c(g_1,g_2)),
\] where $c$ is the Rao's 2-cocycle on $\Sp(W_n)$, whose values lie in $\{\pm1\}$. (For its definition, see \cite{Rao93}.)
The covering group $\wt{\GL}(X_k)$ of $\GL(X_k)$ is defined by $\GL(X_k) \times \{\pm1\}$ with group law
\[
(g_1,\e_1)\cdot(g_2,\e_2)=(g_1g_2,\e_1\e_2\cdot(\det g_1, \det g_2)), \quad \quad g_1,g_2 \in \GL(X_k),
\]
where $(\cdot, \cdot)$ denotes the Hilbert symbol. 

For a Levi subgroup $\M' \simeq \GL_{k_1} \times \cdots \times \GL_{k_r} \times \Mp(W_{0})$ of $\Sp(W_n)$, denote its double covering group $\wt{\M}'$ by
\[
\wt{\M}' \simeq \wt{\GL_{k_1}} \times_{ \{\pm{1}\} } \cdots \times_{\{\pm{1}\}} \wt{\GL_{k_r}} \times_{\{\pm{1}\}} \Mp(W_{0}).
\]
Any unipotent subgroup $\N'$ of $\Sp(W_n)$ splits uniquely in $\Mp(W_n)$. Therefore, we denote its splitting image in $\Mp(W_n)$ using the same symbol $\N'$. If $\P'=\M'\N'$ is a parabolic subgroup of $\Sp(W_n)$, we call $\wt{\P}'=\wt{\M}'\N'$ the double covering group of $\P'$. Every parabolic subgroup of $\Mp(W_n)$ has this form.

\subsubsection{Orthogonal groups}
Let $V_m$ be a $(2m+1)$-dimensional vector space over $F$ equipped with a non-degenerate symmetric bilinear form $(\cdot,\cdot)_{V_m}$ of discriminant 1. There are two isomorphism classes of $V_m$ depending on the dimension of maximal isotropic subspaces of $V_m$. If it is $m$, it is denoted by $V_m^{+}$ and called a split quadratic space. Otherwise, the dimension of maximal isotropic subspaces of $V_m$ is $m-1$ and it is denoted by $V_m^{-}$. We call this one non-split.
Define
\[
\e(V_m)=\begin{cases}+, \quad \text{if $V_m$ is split}\\
-, \quad \text{if $V_m$ is non-split}. \end{cases}
\]
Let $\O(V_m)$ be the associated orthogonal group of $V_m$. Observe that 
\[
\O(V_m)=\SO(V_m) \times \{\pm1\},
\]
where $\SO(V_m)$ is the special orthogonal group. 
Then $\SO(V_m)$ is split if and only if $V_m$ is split.
Given any irreducible admissible representation $\pi$ of $\SO(V_m)$, there are exactly two extensions of $\pi$ to $\O(V_m)$. It is determined by the action of $-1 \in \O(V_m)$; it acts on $V_m$ as a scalar multiple $1$ or $-1$. Therefore, the study of $\Irr(\O(V_m))$ is essentially that of $\Irr(\SO(V_m))$.

Let $\{e_1,\cdots,e_r\}$  and $\{e_1^*,\cdots,e_r^*\}$ be maximal isotropic subspaces of $V_m$ satisfying
\[
( e_i,e_j )_{V_m}=( e_i^*,e_j^* )_{V_m}=0, \quad ( e_i,e_j^* )_{V_m}=\delta_{ij}.
\]
For $1\le k \le r$, let 
\[
Y_k=\text{Span}\{e_1,\cdots,e_k\} \text{ and } Y_k^*=\text{Span}\{e_1^*,\cdots,e_k^*\},
\]
and $V_{m,k}$ be a subspace of $V_m$ 
so that $V_m=Y_k \oplus V_{m,k}\oplus Y_k^*$. If $V_m$ is split, let $e$ be an element in $V_{m,m}$ such that $(e,e)_{V_m}=1$.

Next, we consider the flag of isotropic subspaces
\[
Y_{k_1} \subseteq Y_{k_1+k_2} \subseteq \cdots \subseteq Y_{k_1+\cdots +k_r}\subseteq V_m.
\]
The stabilizer of such a flag is a parabolic subgroup $\P$ of $\O(V_m)$ whose Levi factor $\M$ is
\[
\M \simeq \GL_{k_1} \times \cdots \times \GL_{k_r} \times \O(V_{m,k_1+\cdots + k_r}),
\]
where each $\GL_{k_i}$ is the group of invertible linear maps on Span $\{e_{\sum_{t=1}^{i-1}k_t+1},\cdots,e_{\sum_{t=1}^{i}k_t}\}$.\\

 Throughout the paper, we write $\J_n$ for $\Sp(W_n)$ and $\wt{\J_n}$ for $\Mp(W_n)$. We also write $\H_m^{\pm}$ for $\SO(V_m^{\pm})$.

\subsubsection{Genuine representations}
Let $\psi$ be a non-trivial additive character of $F$, and let $\gamma_{\psi}$ be the Weil factor associated with $\psi$ (see the appendix of \cite{Rao93}). 

For a representation $\tau$ of $\GL_k(F)$, the genuine representation $\tau_{\psi}$ of $\wt{\GL_k}(F)$ is defined by
\[
\tau_{\psi}(a,\epsilon) = \epsilon \gamma_{\psi}(\det(a)) \tau(a), \quad (a,\epsilon) \in \GL_k(F) \times \{\pm1\}.
\]
Every irreducible admissible genuine representation of $\wt{\GL_k}(F)$ corresponds to a representation of $\GL_k(F)$ in this way. On the other hand, an irreducible representation $\wt{\pi}$ of $\wt{\J_n}(F)$ is called a genuine representation if $-1 \in \wt{\J_n}(F)$ does not act trivially.

Throughout this paper, when discussing irreducible representations of $\wt{\J_n}(F)$ or $\wt{\GL_k}(F)$, we implicitly assume that they are genuine representations.

\subsection{Generic characters and generic representations}\label{gen}
Let $\U$ be the unipotent radical of a Borel subgroup $\B=\T\U$ of $\H_m^+$, where $\T$ is the $F$-rational torus stabilizing the lines $Fv_i$ for each $i=1,\cdots,m$. 
Let $\mu$ be a generic character of $\U(F)$ defined by
\[\mu(u):=\psi((ue_2,e_1^*)_{V_m}+\cdots + (ue_m,e_{m-1}^*)_{V_m}+(ue,e_m^*)_{V_m}),\quad u \in \U(F).
\]
By definition, a representation $\pi$ of $\H_m^+$ is generic if $\Hom_{\U}(\pi,\mu) \ne 0$. Similarly, let $\U'$ be the unipotent radical of the Borel subgroup $\wt{\B}'=\wt{\T}'\U'$ of $\wt{\J_n}$, where $\T'$ is the $F$-split torus stabilizing the lines $Fw_i$ for each $i=1,\cdots,n$. 
For each $\lambda \in F^{\times}$, define a character $\mu_{\lambda}'$ of $\U'$ by
\[\mu_{\lambda}'(u'):=\psi(\pair{u'f_2,f_1^*}_{W_n}+\cdots +  \pair{u'f_{n},f_{n-1}^*}_{W_n}+\frac{\lambda}{2}\cdot\pair{u'f_n^*,f_n^*}_{W_n}), \quad u' \in \U'(F).\]

There is an action of $\T(F)$ (resp. $\T'(F)$) on the set of generic characters of $\U(F)$ (resp. $\U'(F)$), where the action is given by conjugation. It is easily checked that there is a unique $\T(F)$-orbit of generic characters of $\U(F)$, and thus, the definition of a generic representation of $\H_m^{+}(F)$ does not depend on the choice of generic character $\mu$. However, in the metaplectic cases, the $\T'(F)$-orbits of generic characters of $\U'(F)$ are indexed by non-trivial characters of $F^{\times}$ modulo the action of $F^{\times 2}$. More precisely, the map $\lambda \rightarrow \mu_{\lambda}'$ gives a bijection (depending on $\psi$):
\[
F^\times/F^{\times2}\rightarrow \text{\{$\T'(F)$-orbits of generic characters of $\U'(F)$\}}
\]
(see \cite[Sect. 12]{GGP}). Therefore, if $\Hom_{\U'}(\wt{\pi},\mu_{\lambda}') \ne 0$, we say that $\wt{\pi}$ is $\mu_{\lambda}'$-generic. Though the notion of genericity depends on $\psi$, when the choice of $\psi$ is clear in the context, we omit $\psi$.

We write $\Irr_{\text{gen}}(\wt{\J_n})$ for the subset of $\Irr(\wt{\J_n})$ consisting of $\mu_{1}'$-generic representations. Often, we call an element in $\Irr_{\text{gen}}(\wt{\J_n})$ a generic representation of $\wt{\J_n}(F)$. The notion of generic characters and generic automorphic representations can be similarly defined for global fields.

\subsection{$\gamma$-factors}
Let $\G$ be either $\H_m^{+}=\SO(V_m^{+})$ or $\wt{\J_n}=\Mp(W_n)$
. Let $\pi$ be an irreducible generic representation of $\G(F)$ and $\sigma$ an irreducible generic representation of $\GL_l(F)$. The local $\gamma$-factors can be defined in two distinct ways. The first approach is through the Langlands--Shahidi method (\cite{Sha90}, \cite{Sz10}, \cite{L15}), while the second approach relies on the proportionality between the functional equations of local Rankin--Selberg type integrals (or Shimura type integrals) for $\H_{m}^{+} \times \GL_l$ (or $\wt{\J_{n}} \times \GL_l$). It is a product of a monomial of $t=q^{-s}$ and a rational function $\frac{Q(t)}{P(t)}$, where $Q(t),P(t) \in \CC[t]$ such that $Q(0)=P(0)=1$. For a precise definition, please refer to \cite{So93} and \cite{Kap15}. It is worth noting that the two definitions of $\gamma$-factors are reconciled by several properties, as explained in \cite{Kap15}. Since we consider both the cases of $\operatorname{char}(F) = 0$ and $\operatorname{char}(F) = p$ simultaneously, we adopt the definition of $\gamma$-factors from the Langlands--Shahidi method, as they are defined in both cases. It is worthy to mention that Langlands-Shahidi method produces not only $\gamma$-factors but also local $L$-factors as follows:

Write $\gamma(s,\pi \times \sigma,\psi)=\alpha\cdot t^k \cdot \frac{Q(t)}{P(t)}$ for some $\alpha \in \CC^{\times}$ and polynomials $Q(t),P(t) \in \CC[t]$ such that  $\text{gcd}\big(Q(t),P(t)\big)=1$ and $Q(0)=P(0)=1$. Then $L_{\psi}(s,\pi \times \sigma,\psi)$ is defined by 
\begin{flalign}
\label{L-ftn}&L_{\psi}(s,\pi \times \sigma)\coloneqq Q(t)^{-1}.
\end{flalign}
Here the dependence on $\psi$ occurs only when $\pi$ is a representation of $\widetilde{\J_n}$. 

We next collect some fundamental properties of local $\gamma$-factors that will be needed later. While these properties are unconditional for $\G=\H_m^+$ (all characteristics) and for $\G=\wt{\J_n}$ in characteristic zero, the case of $\G=\wt{\J_n}$ in positive characteristic has not yet been fully developed. To proceed uniformly, we introduce the following working hypothesis.

\begin{hyp1}\label{hyp1}
The $\gamma$-factors for generic representations of $\wt{\J_n} \times \GL_l$ are properly defined in the case $\mathrm{char}(F) = p$. Furthermore, they satisfy properties (i)–(v) in Proposition~\ref{gamma}.
\end{hyp1}

We would like to recall some important properties of the $\gamma$-factors that will be utilized later in our discussion. (Caution : the property (vi) does not exist in the $\cha(F)=p$ case.)

\begin{prop}\label{gamma}
Let $\pi$ and $\sigma$ be irreducible admissible generic representations of $\G(F)$ and $\GL_l(F)$, respectively. 
Assume Working Hypothesis~\ref{hyp1} when $\G=\wt{\J_n}$ and $\operatorname{char}(F)=p$. 
Then the following properties hold:
\begin{enumerate}
\item (Unramified twist) $\gamma(s,\pi \times \sigma|\det|_F^{s_0},\psi)=\gamma(s+s_0,\pi \times \sigma,\psi)$ $\quad \text{ for } s_0 \in \RR.$
\item (Multiplicativity) Let $\tau_1 \otimes \cdots \otimes \tau_r \otimes \pi_0$ (resp. $(\tau_1)_{\psi} \otimes \cdots \otimes (\tau_r)_{\psi} \otimes \wt{\pi_0}$) be an irreducible admissible generic representation of $\GL_{r_1}(F)\times \cdots \times \GL_{r_r}(F) \times \H_{k}^{+}(F)$ (resp. $\wt{\GL_{r_1}}(F)\times_{\{\pm1\}} \cdots \times_{\{\pm1\}} \wt{\GL_{r_r}} (F) \times_{\{\pm1\}}\wt{\J}_{k}(F)$). Suppose that $\pi$ is the irreducible admissible generic constituent of a parabolically induced representation from $\tau_1 \otimes \cdots \otimes \tau_r \otimes \pi_0$ or $(\tau_1)_{\psi} \otimes \cdots \otimes (\tau_r)_{\psi} \otimes \wt{\pi_0}$. Then, 
\[\gamma(s,\pi \times \sigma,\psi)=\gamma(s,\pi_0 \times \sigma,\psi)\prod_{i=1}^r \gamma(s,\tau_i \times \sigma,\psi)\gamma(s,\tau_i^{\vee} \times \sigma,\psi).\]
Here, $\gamma(s,\tau_i \times \sigma,\psi)$ is the $\gamma$-factor for $\GL_{r_i} \times \GL_k$ of Jacquet, Piatetski-Shapiro and Shalika \cite{JSS}.
\item (Dependence on $\psi$) Given $a\in F^{\times}$, denote by $\psi_{a}$ the character of $F$ given by $\psi_{a}(x) \coloneqq \psi(a x)$ for $x \in F$. Let $\omega_{\pi}, \omega_{\sigma}$ be the central character of $\pi$ and $\sigma$, respectively. Then,
\[\gamma(s,\pi \times \sigma,\psi_a)= \omega_{\sigma}(a)^l \cdot |a|_F^{hl(s-\frac{1}{2})}\cdot  \gamma(s,\pi \times \sigma,\psi).
\]
Here, $h=2m$ if $\G=\H_m^+$; $h=2n$ if $\G=\wt{\J_n}$.
\item (Unramified factors) When all data are unramified, we have \[\gamma(s,\pi \times \sigma,\psi)=\frac{L_{\psi}(1-s,\pi^{\vee}\times \sigma^{\vee})}{L_{\psi}(s,\pi \times \sigma)}.
\]

\item (Global property: Functional equation)
Let $K$ be a global field with a ring of ad\`{e}les $\AA$ and $\Psi$ be a nontrivial character of $K \bs \AA$. Assume that $\Pi$ and $\Sigma$ are generic (depending on $\Psi$) cuspidal representations of $\G(\A)$ and $\GL_l(\AA)$, respectively. Let $S$ be a finite set of places of $F$ such that for $v \notin S$, all data are unramified. Then 
\[L_{\Psi}^S(s,\Pi \times \Sigma)=\prod_{v \in S} \gamma(s,\Pi_v \times \Sigma_v) \cdot L_{\Psi}^S(1-s,\Pi^{\vee} \times \Sigma^{\vee}).
\]
Here, $L_{\Psi}^S(s,\Pi \times \Sigma):= \displaystyle\prod_{v \notin S} L_{\Psi_v}(s, \Pi_v \times \Sigma_v)$ is the partial $L$-function with respect to $S$.
\item (Archimedean property)
For an archimedean field $F$, 
\[\gamma(s,\pi \times \sigma,\psi)=\gamma^{\textrm{Artin}}(s,\pi \times \sigma,\psi).\]
Here, $\gamma^{\textrm{Artin}}(s,\pi \times \sigma,\psi)$ is the Artin $\gamma$-factor under the local Langlands correspondence. 

\item (Tempered $L$-function)
Let $\pi,\sigma$ be irreducible tempered representations of $\G(F)$ and $\GL_k(F)$. Then $L_{\psi}(s,\pi \times \rho)$ is holomorphic for $\Re(s)>0$.
\end{enumerate}
\end{prop}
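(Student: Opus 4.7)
Every item in the proposition concerns generic $\gamma$-factors defined via the Langlands--Shahidi method, so the natural strategy is to read each property off the structure of the local coefficient attached to a parabolic induction. I would dispatch the eight items in three groups, according to the depth of the argument required, and I would handle the two families $\G=\H_m$ and $\G=\wt{\J_n}$ in parallel, using \cite{Sha90} as the template, \cite{L15} for the positive characteristic Langlands--Shahidi machinery, and \cite{Kap15} for the metaplectic refinement.

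The formal items (i), (iv), (vii), (viii) come essentially for free. Unramified twist (i) is the shift $s\mapsto s+s_0$ in the inducing datum $\sigma$. The unramified identification (iv) is the Casselman--Shalika evaluation of the local coefficient at spherical data, giving the ratio of Langlands $L$-functions in terms of Satake parameters. The tempered $L$-function (vii) is defined by extracting the numerator polynomial of $\gamma$, so there is nothing to prove. Holomorphy of tempered $\epsilon$-factors (viii) reduces to non-vanishing and holomorphy of the normalized tempered intertwining operator on the unitary axis, combined with the matching of numerator factors between $\gamma(s,\pi\times\sigma,\psi)$ and $\gamma(1-s,\pi^\vee\times\sigma^\vee,\psi)$.

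The substantial items are (ii) and (iii). For multiplicativity (ii), I would argue by transitivity of parabolic induction, reducing to the case of a single GL factor, and then apply the cocycle decomposition of the standard intertwining operator along a maximal Levi; this is the main computation of \cite{Sha90}, transposed to positive characteristic in \cite{L15} and to the metaplectic cover in \cite{Kap15}. The symmetric pairing of $\tau_i$ with $\tau_i^{\vee}$ reflects the fact that the GL block appears together with its dual in the Levi of the ambient parabolic of $\G$, which is also where the genuine character $\chi_{\psi}$ enters in the metaplectic case. For dependence on $\psi$ (iii), I would carry out a direct calculation: conjugation by a diagonal element $t_a$ in the relevant torus carries the generic character attached to $\psi_a$ back to the one attached to $\psi$, and the change of variables in the local zeta / local coefficient produces exactly the factor $\omega_\sigma(a)^l\,|a|_F^{hl(s-1/2)}$, where the rank $h=2m$ or $2n$ enters through the modulus character along the unipotent radical of the maximal parabolic used to define $\gamma$.

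Finally, the global functional equation (v) is the defining property of Langlands--Shahidi $\gamma$-factors: it falls out of the constant term of a cuspidal Eisenstein series combined with the global functional equation of Eisenstein series. The archimedean property (vi) is Shahidi's matching theorem at archimedean places, extended to $\wt{\J_n}$ by incorporating the $\chi_\psi$-twist into the local Langlands parameter. \textbf{Main obstacle.} Nothing here is conceptually new; the only real difficulty is bookkeeping, namely ensuring that each of the eight properties is simultaneously available for both $\H_m$ and $\wt{\J_n}$ in characteristic $0$ and in characteristic $p$. This is precisely why the statement combines \cite{Sha90}, \cite{L15}, and \cite{Kap15}, and my plan is to attribute each item to the appropriate source rather than to redevelop the theory.
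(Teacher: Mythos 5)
Your proposal matches the paper's treatment: the paper states this proposition purely as a recollection from \cite{Kap15}, \cite{Sha90}, and \cite{L15} and offers no proof of its own, which is exactly your stated plan of attributing each item to its source, and your sketches of the underlying Langlands--Shahidi arguments are accurate. Your ``main obstacle'' is also the one the paper acknowledges: for $\wt{\J_n}$ in characteristic $p$ these properties are not yet fully established, and the paper handles this by adopting them as an explicit Working Hypothesis immediately after the proposition.
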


When $n=0$ and $m=0$, the $\gamma$-factors of $\H_0^{+} \times \GL_k$ and $\wt{\J_0}\times \GL_k$ are defined as follows:\\
Put $\mathbb{I}_{V_0^{+}}$ (resp. $\mathbb{I}_{W_0}$) the trivial representation of $\H_0^{+}(F)$ (resp. $\wt{\J_0}(F)$). Then for any irreducible generic representation $\sigma$ in $\text{Irr}(\GL_k)$,
\begin{flalign*} &\gamma(s,\mathbb{I}_{V_0^{+}} \times \sigma,\psi)\coloneqq 1 \\ &\gamma(s,\mathbb{I}_{W_0} \times \sigma,\psi)\coloneqq 1.
\end{flalign*}
It is clear that these definitions are compatible with the local functorial lifting.

In the case $\mathrm{char}(F)=0$, properties (i)--(vi) are established in 
\cite[Theorem~1]{Kap15} (see also \cite[Theorem~3.5]{Sha90}), 
while property (vii) is known only for $\G=\H_m^{+}$ 
(\cite[page 573]{CS}, \cite[Theorem~1.1]{HO13}).

In the case $\mathrm{char}(F)=p$, when $\G=\H_m^{+}$, 
properties (i)--(v) are proved in \cite[Theorem~1.4]{L15}, 
and property (vii) in \cite[Theorem~1.1]{CGL24}. 
For $\G=\wt{\J_n}$, however, the definition of generic $\gamma$-factors 
has not yet been established. 
Nevertheless, it is natural to expect that the methods of \cite{L15} 
extend to metaplectic groups as well. 
Accordingly, in this paper we adopt the above Working Hypothesis~\ref{hyp1} 
for $\G=\wt{\J_n}$ in positive characteristic.

Finally, to the best of the author’s knowledge, 
when $\mathrm{char}(F)=0$ the validity of property (vii) for $\G=\wt{\J_n}$ 
is still unknown. 
We therefore provide a proof in Corollary~\ref{chol}. 
Note that this proof relies on the Working Hypothesis~\ref{hyp1} 
in the case $\mathrm{char}(F)=p$, 
but is unconditional when $\mathrm{char}(F)=0$.

\begin{rem}\label{3gamma}
We make a brief remark about \( \gamma \)-factors in the case \( \operatorname{char}(F) = 0 \). Shahidi~\cite{Sha90} was the first to observe the fundamental properties of the Langlands--Shahidi \( \gamma \)-factors. When \( l = 1 \), Lapid--Rallis~\cite{LR05} and W.~T.~Gan~\cite{Gan2} defined \( \gamma \)-factors for irreducible smooth representations (including non-generic ones) of \( \G \times \GL_1 \), arising from the doubling method. They further established the so-called \emph{Ten Commandments}, which refer to ten fundamental properties that \( \gamma \)-factors satisfy.

Recently, Cai, Friedberg, and Kaplan (\cite{CFK22}) made a breakthrough by generalizing Lapid-Rallis’s Ten Commandments to $\H_m^+\times \GL_l$ for $l\ge 1$ arising from the doubling method to the twisted doubling method.
It is noteworthy that their $\gamma$-factors are defined not only for generic representations but also for non-generic representations of $\H_m^+(F) \times \GL_l(F)$ and they proved that three different definitions of $\gamma$-factors actually coincide for generic representations. \end{rem}

\begin{subsection}{Reduction of Theorem~\ref{main} to $\lambda=1$ cases}

In this subsection, our aim is to establish a reduction from Theorem~\ref{main} to Theorem~\ref{main1} below, which specifically focuses on $\mu_1'$-generic representations of $\wt{\J_n}(F)$.

In order to accomplish our goal, we rely on the following lemma.

\begin{lem}\label{prop:mu_d} A $\T'(F)$-orbit of irreducible $\mu_{\lambda}'$-generic representations with respect to $\psi$ is equals to a $\T'(F)$-orbit of irreducible admissible $\mu_{1}'$-generic representations with respect to $\psi_{\lambda}$.
\end{lem}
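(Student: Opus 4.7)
The plan is to reduce the lemma to an explicit torus calculation on generic characters. I will first unwind the two characters on the simple root subgroups of $\U'(F)$, then exhibit an element $t\in\T'(F)$ whose conjugation sends one to the other, and finally invoke the standard fact that $\mu$-genericity is an invariant of the $\T'(F)$-orbit of $\mu$.

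Writing $\mu'_{\lambda,\psi}$ and $\mu'_{1,\psi_\lambda}$ for the two characters in question and using $\psi_\lambda(x)=\psi(\lambda x)$, the defining formula of Section~\ref{gen} gives
\[\mu'_{\lambda,\psi}(u')=\psi\!\left(\sum_{i=2}^{n}\pair{u'f_i,f_{i-1}^{*}}_{W_n}+\tfrac{\lambda}{2}\pair{u'f_n^{*},f_n^{*}}_{W_n}\right),\]
\[\mu'_{1,\psi_\lambda}(u')=\psi\!\left(\lambda\sum_{i=2}^{n}\pair{u'f_i,f_{i-1}^{*}}_{W_n}+\tfrac{\lambda}{2}\pair{u'f_n^{*},f_n^{*}}_{W_n}\right).\]
Identifying the first $n-1$ terms as the coordinates on the short simple root subgroups (attached to $\alpha_i=\varepsilon_i-\varepsilon_{i+1}$, $1\le i\le n-1$) and the last term as the coordinate on the long root subgroup (attached to $\alpha_n=2\varepsilon_n$), the two characters agree on the long root coordinate and differ by the scaling factor $\lambda$ on each short simple root.

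To conjugate between them, I take
\[t\col\diag(\lambda^{n-1},\lambda^{n-2},\ldots,\lambda,1,1,\lambda^{-1},\ldots,\lambda^{-(n-1)})\in\T'(F),\]
so that $\alpha_i(t)=\lambda$ for $1\le i<n$ while $\alpha_n(t)=1$. Using the identity $t^{-1}u_{\alpha_i}(x)t=u_{\alpha_i}(\alpha_i(t)^{-1}x)$ in $\J_n(F)$, a direct check shows that conjugation by $t$ carries $\mu'_{1,\psi_\lambda}$ precisely to $\mu'_{\lambda,\psi}$, so the two characters lie in a common $\T'(F)$-orbit of generic characters of $\U'(F)$.

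Finally, I invoke the standard principle: for any lift $\wt t\in\wt{\J_n}(F)$ of $t$, the map $W\mapsto W\circ\wt\pi(\wt t)$ gives an isomorphism $\Hom_{\U'}(\wt\pi,\mu)\cong\Hom_{\U'}(\wt\pi,t\cdot\mu)$, so that $\mu$-genericity is intrinsic to the $\T'(F)$-orbit of $\mu$. Combined with the previous step, this equates the two classes of generic representations, proving the lemma. I do not anticipate any serious obstacles; the only minor point to verify is that conjugation by $\wt t$ on $\U'(F)$ coincides with conjugation by $t$, which is immediate from the uniqueness of the splitting of $\U'$ into $\wt{\J_n}$, and that the ambiguity $\wt t\mapsto-\wt t$ in choosing the lift only alters $\wt\pi(\wt t)$ by a central scalar and thus preserves the $\Hom$ space.
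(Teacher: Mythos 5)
Your proposal is correct and follows essentially the same route as the paper: both exhibit an explicit diagonal element of $\T'(F)$ with entries powers of $\lambda$ (scaling the short simple-root coordinates by $\lambda$ while fixing the long-root coordinate) to conjugate one generic character into the other, and then invoke the $\T'(F)$-invariance of genericity. Your extra remark on lifting $t$ to $\wt{\J_n}(F)$ and the sign ambiguity is a welcome precision the paper leaves implicit, but it does not change the argument.
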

\begin{proof}
Let $\pi$ be an irreducible $\mu_{\lambda}'$-generic representation of $\wt{\J_n}(F)$ with respect to $\psi$. We aim to show that $\pi$ is $\mu_1'$-generic with respect to $\psi_{\lambda}$.

Consider the action of $t \in \T'(F)$ on a generic character $\chi'$ of $\U'(F)$, given by $(\chi')^t(u') = \chi'(t^{-1}u't)$. Specifically, for $t \in \T'(F)$ with $t(f_i) = t_if_i$ and $t(f_i^*) = t_i^{-1}f_i^*$ $(1 \leq i \leq n)$, we have
$$(\mu_{\lambda}')^t(u') = \psi\left(\sum_{i=1}^{n-1}t_i^{-1}t_{i+1}\langle u'f_i,f_{i+1}^*\rangle_{W_n} + \lambda t_n^{-2}\frac{\langle u'f_n^*,f_n^*\rangle_{W_n}}{2}\right).$$

In particular, if we choose $\underline{t} \in \T'(F)$ specifically with $t_i = \lambda^{-(n-i)}$, then we obtain
$$(\mu_{\lambda}')^{\underline{t}}(u') = \psi_{\lambda}\left(\sum_{i=1}^{-1}\langle u'f_i,f_{i+1}^*\rangle_{W_n} + \frac{\langle u'f_n^*,f_n^*\rangle_{W_n}}{2}\right).$$

Therefore, a $(\mu_{\lambda}')^{\underline{t}}$-generic representation (with respect to $\psi$) is $\mu_1'$-generic with respect to $\psi_{\lambda}$. Since the notion of genericity is invariant under $\T'(F)$-orbits, it follows that $\pi$ is $\mu_1'$-generic with respect to $\psi_{\lambda}$.

This completes the proof.
\end{proof}

The lemma above and Proposition~\ref{gamma} (iii) imply that it suffices to consider $\lambda=1$ cases to prove Theorem~\ref{main}. Hence, for the remainder of the paper, we will focus on establishing the following theorem, which serves as a replacement for Theorem~\ref{main}.

\begin{thm}[Local Converse Theorem for $\wt{\Sp}_{2n}$] \label{main1} Let $\wt{\pi}$ and $\wt{\pi}'$ be irreducible admissible $\mu_1'$-generic representations of $\wt{\J_n}(F)$ such that
\[\gamma(s,\wt{\pi} \times \sigma,\psi)=\gamma(s,\wt{\pi}' \times \sigma,\psi)
\] holds for all irreducible generic supercuspidal representation $\sigma$ of $\GL_i(F)$ with $1\le i \le n$. Then
\[\wt{\pi} \simeq \wt{\pi}'.
\]
\end{thm}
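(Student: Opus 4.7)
The strategy is to transfer the converse problem from $\wt{\J_n}$ to $\H_n^{\pm} = \SO(V_n^{\pm})$ through the equal-rank local theta correspondence, apply the known LCT for $\SO_{2n+1}$, and then descend. For each $\wt{\pi} \in \Irr_{\gen}(\wt{\J_n})$, the conservation/dichotomy relation for the $(\wt{\J_n},\H_n^{\pm})$-correspondence asserts that $\theta_{V_n^{\epsilon}}(\wt{\pi})$ is nonzero for exactly one sign $\epsilon = \epsilon(\wt{\pi}) \in \{+,-\}$; set $\pi_{\star} := \theta_{V_n^{\epsilon(\wt{\pi})}}(\wt{\pi}) \in \Irr(\H_n^{\epsilon(\wt{\pi})})$, and similarly $\pi_{\star}'$ from $\wt{\pi}'$. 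By Theorem~\ref{Theta1}, $\pi_{\star}$ and $\pi_{\star}'$ are irreducible and generic, and by Theorem~\ref{Theta}, for each irreducible supercuspidal $\sigma$ of $\GL_i(F)$ their twisted $\gamma$-factors differ from those of $\wt{\pi}$ (respectively $\wt{\pi}'$) only by an explicit correction factor depending on $\sigma$, $\psi$ and the dichotomy sign, but independent of the particular metaplectic representation being lifted.

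The first, and hardest, step is to show $\epsilon(\wt{\pi}) = \epsilon(\wt{\pi}')$, so that both theta lifts land on the same form of $\SO_{2n+1}$. The dichotomy sign is governed by a local root number that can be extracted from $\gamma(s, \wt{\pi} \times \chi, \psi)$ for a suitable quasi-character $\chi$ of $F^{\times}$; the hypothesis of the theorem, specialized to $i=1$, then forces the two signs to coincide. Writing $\epsilon$ for this common sign, we have $\pi_{\star}, \pi_{\star}' \in \Irr_{\gen}(\H_n^{\epsilon})$, and combining the correction-factor identity from Theorem~\ref{Theta} with the $\gamma$-factor hypothesis for $(\wt{\pi}, \wt{\pi}')$ yields
\[
\gamma(s,\pi_{\star} \times \sigma,\psi) = \gamma(s,\pi_{\star}' \times \sigma,\psi)
\]
for every irreducible supercuspidal $\sigma$ of $\GL_i(F)$ with $1 \le i \le n$.

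I would then invoke the LCT for $\SO_{2n+1}$, proved by Jiang--Soudry in characteristic zero and refined and extended by Y.~Jo to cover positive characteristic and to require only twists by $\GL_i$ with $1 \le i \le n$, to conclude $\pi_{\star} \simeq \pi_{\star}'$. Finally, Howe duality, namely the injectivity of the equal-rank theta correspondence on the generic spectrum, built into the standard construction of the theta lift as a maximal quotient of the Weil representation, yields $\wt{\pi} \simeq \wt{\pi}'$. The principal obstacle is thus the sign-matching step; everything else flows from the theta-theoretic results of Sections~3--4 together with the established LCT for $\SO_{2n+1}$. In positive characteristic, the argument rests in addition on the Working Hypothesis on metaplectic $\gamma$-factors already recorded in the preliminaries.
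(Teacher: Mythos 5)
Your overall strategy---transfer to $\SO_{2n+1}$ via the equal-rank theta correspondence, apply the known LCT there (Jo's refinement of Jiang--Soudry), and pull back by injectivity of theta---is exactly the paper's strategy, but only for \emph{tempered} representations, and your plan has a genuine gap precisely where it departs from the paper. Theorem~\ref{Theta1} establishes that the theta lift is irreducible, generic and lands on the split form $\H_n^{+}$ only when $\wt{\pi}$ is tempered, and Theorem~\ref{Theta} \emph{assumes} that $\Theta_{\psi}(\wt{\pi})$ is generic rather than proving it. For a general $\mu_1'$-generic (non-tempered) representation nothing in the paper, and nothing in your argument, guarantees that $\pi_{\star}$ is generic, so the appeal to the LCT for $\SO_{2n+1}$ is not justified at that level of generality. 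The paper closes this gap by an entire reduction step that your proposal omits: every irreducible $\mu_1'$-generic representation is realized as the unique generic constituent of an induced representation with support $(\tau_1,\dots,\tau_r;\wt{\pi}_0)$ and exponents $(s_1,\dots,s_r)$, where the $\tau_i$ are unitary supercuspidal on $\GL$-factors and $\wt{\pi}_0$ is tempered generic; the multiplicativity of $\gamma$-factors together with a pole/zero analysis (Lemma~\ref{temp}, Propositions~\ref{Pole} and~\ref{equal}) matches the $(\tau_i,s_i)$ data of $\wt{\pi}$ and $\wt{\pi}'$ and reduces the hypothesis to equality of $\gamma$-factors for the tempered pieces $\wt{\pi}_0,\wt{\pi}_0'$, at which point the theta-transfer argument (Theorem~\ref{sc}) applies. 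Without this reduction your proof does not go through.

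Two further points. First, the step you single out as ``the hardest''---matching the dichotomy signs $\epsilon(\wt{\pi})=\epsilon(\wt{\pi}')$ via root numbers extracted from $\GL_1$-twists---is in fact a non-issue in the paper's setup: once the (tempered) lift is known to be generic it must be a representation of the split form $\H_n^{+}$, since the non-split odd orthogonal group is not quasi-split and carries no generic representations; the paper's proof of Theorem~\ref{Theta1} notes exactly this. You neither need nor carry out the root-number argument. Second, Theorem~\ref{Theta} asserts literal equality $\gamma(s,\wt{\pi}\times\sigma,\psi)=\gamma(s,\pi\times\sigma,\psi)$, with no correction factor; your weaker formulation would still suffice but misstates the result.
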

\end{subsection}
\newpage
\section{Local theta correspondence}

In this section, we introduce the local theta correspondence induced by a Weil representation $\omega_{\psi,V_m^{\pm},W_n}$ of $\H_m^{\pm}(F) \times \wt{\J}_n(F)$ and recapitulate some fundamental results associated with it. Subsequently, we prove two theorems that will play crucial roles in establishing our main theorem in the next section.

The group $\H_m^{\pm}(F) \times \wt{\J}_n(F)$ naturally possesses a distinguished representation known as the Weil representation, denoted as $\omega=\omega_{\psi,V_m^{\pm},W_n}$. For every $\pi\in\Irr(\H_m^{\pm})$, the maximal $\pi$-isotypic quotient of $\omega$ can be expressed as
\[
\pi\boxtimes \Theta_{\psi,W_n,V_m^{\pm}}(\pi),
\]
where $\Theta_{\psi,W_n,V_m^{\pm}}(\pi)$ denotes a smooth finite-length representation of $\wt{\J_n}(F)$ referred to as the big theta lift of $\pi$. The small theta lift of $\pi$ corresponds to the maximal semisimple quotient of $\Theta_{\psi,W_n,V_m^{\pm}}(\pi)$. Similarly, for each $\wt{\pi}\in\Irr(\wt{\J_n})$, we obtain a smooth finite-length representation $\Theta_{\psi,V_m^{\pm},W_n}(\wt{\pi})$ of $\H_m^{\pm}(F)$. The maximal semisimple quotient of $\Theta_{\psi,V_m^{\pm},W_n}(\wt{\pi})$ (and $\Theta_{\psi,W_n,V_m^{\pm}}(\pi)$) is denoted as $\theta_{\psi,V_m^{\pm},W_n}(\wt{\pi})$ (and $\theta_{\psi,W_n,V_m^{\pm}}(\pi)$) respectively. 

\begin{thm}[Howe duality] \label{howe}For $\wt{\pi}\in\Irr(\wt{\J_n})$, if  $\theta_{\psi,V_m^{\pm},W_n}(\wt{\pi})$ is nonzero, it is irreducible. Furthermore, for $\wt{\pi}_1,\wt{\pi}_2\in\Irr(\wt{\J_n})$, if $\theta_{\psi,V_m^{\pm},W_n}(\wt{\pi}_1)$ and $\theta_{\psi,V_m^{\pm},W_n}(\wt{\pi}_2)$ are nonzero and $\theta_{\psi,V_m^{\pm},W_n}(\wt{\pi}_1) \simeq \theta_{\psi,V_m^{\pm},W_n}(\wt{\pi}_2)$, then $\wt{\pi}_1 \simeq \wt{\pi}_2$. The analogous statements hold for $\pi,\pi_1,\pi_2\in\Irr(\H_m^{\pm})$. 
\end{thm}

The Howe duality was originally proved by Waldspurger~\cite{Wa90} for local fields of characteristic zero with residual characteristic not equal to~$2$, and later extended to a more general setting by Gan--Takeda~\cite{GT1,GT2}. At archimedean places, the conjecture was proved by Howe himself.

For the rest of this paper, we conveniently denote $V_n$ and $W_n$ as $V$ and $W$, respectively. Given $\wt{\pi} \in \Irr(\wt{\J_n})$, there exists a unique $\e \in \{\pm1\}$ such that $\theta_{\psi,V^{\e},W}(\wt{\pi})$ is nonzero (see \cite[Theorem 1.4]{GS}).

Using the theta correspondence between $\wt{\J_n}$ and $\H_n^{\pm}$, we define a mapping $\Theta_{\psi}:\Irr(\wt{\J_n}) \longrightarrow \Irr(\H_n^{+}) \sqcup \Irr(\H_n^{-})$ for each non-trivial additive character $\psi:F \longrightarrow \CC^{\times}$. Specifically, for $\wt{\pi} \in \Irr(\wt{\J_n})$, we have $\Theta_{\psi}(\wt{\pi})=\theta_{\psi,V^{\e},W}(\wt{\pi})$, where $\e \in \{\pm1\}$ is chosen uniquely such that $\theta_{\psi,V^{\e},W}(\wt{\pi})$ is nonzero. The inverse of this map is denoted by $\Theta_{\psi}^{-1}(\pi)=\theta_{\psi,W,V^{\pm}}(\pi)$, which is nonzero for any $\pi \in \text{Irr}(\H_n^{\pm})$.

The bijection $\Theta_{\psi}$ preserves various properties. Here are some of them that will be useful later.


\begin{thm}[{\cite[Theorem C.4]{GI1}}]\label{tem0} Let $\pi \in \Irr_{\temp}(\H_n^{\pm})$ and  $\wt{\pi} \in \Irr_{\temp}(\wt{\J_n})$. Then $\Theta_{\psi,W,V^{\pm}}(\pi) \in \Irr_{\temp}(\wt{\J_n})$. Similarly, if $\Theta_{\psi,V^{\pm},W}(\wt{\pi})$ is non-zero, then $\Theta_{\psi,V^{\pm},W}(\wt{\pi}) \in \Irr_{\temp}(\H_n^{\pm})$. 
\end{thm}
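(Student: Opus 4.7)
The plan is to apply Casselman's criterion for temperedness: a smooth admissible representation of a reductive (or metaplectic) group is tempered precisely when every exponent that appears in its Jacquet module with respect to any standard parabolic lies in the (closed) tempered cone, i.e.\ the dual of the positive Weyl chamber. Thus the theorem reduces to a statement about the exponents of $\mathrm{Jac}_{P'}(\Theta_{\psi,W,V}(\pi))$ for every maximal parabolic $P' = M' N'$ of $\wt{\J_n}$, and analogously for $\H_n$.

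The key input is the Jacquet module of the Weil representation $\omega_{\psi,V,W}$. For a maximal parabolic $P' = M' N'$ of $\wt{\J_n}$ with Levi $M' \simeq \wt{\GL_k}\times_{\{\pm 1\}} \wt{\J_{n-k}}$, Kudla's lemma provides a filtration of $\mathrm{Jac}_{N'}(\omega_{\psi,V,W})$ as an $M'\times \H_n$-module whose successive quotients are of the form
\[
\Sigma_j \boxtimes \bigl(\mathrm{ind}_{Q_j}^{\H_n} (\chi_j \otimes \omega_{\psi,V_j,W_{n-k}})\bigr),
\]
where $V_j$ runs through suitable subspaces of $V$, $Q_j = L_j U_j$ is a corresponding parabolic of $\H_n$ with Levi $L_j \simeq \GL_{a_j}\times \SO(V_j)$, $\chi_j$ is an explicit character on the $\GL_{a_j}$ factor arising from a twist by $\gamma_\psi$, and $\Sigma_j$ is a representation of $\wt{\GL_k}$ induced from a parabolic. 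By the exactness of the Jacquet functor, this filtration pushes through to a filtration of $\mathrm{Jac}_{N'}(\Theta_{\psi,W,V}(\pi))$ whose irreducible constituents are thereby governed by Jacquet modules $\mathrm{Jac}_{U_j}(\pi)$ paired against the Weil representation on the smaller dual pair.

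Next I would read off exponents. Along the $\wt{\J_{n-k}}$ direction, Frobenius reciprocity forces each exponent to come from an exponent of $\mathrm{Jac}_{U_j}(\pi)$ for some $j$, modified only by the central exponent of the smaller Weil representation $\omega_{\psi,V_j,W_{n-k}}$, which is a fixed half-integral shift that is already encoded in the tempered cone for the smaller metaplectic group. Along the $\wt{\GL_k}$ direction, the exponent is determined by the central character of $\Sigma_j$, which is the central exponent of the Weil representation times an Iwahori-type shift. Applying Casselman's criterion to $\pi$ (tempered on $\H_n$) bounds the $U_j$-exponents, and a direct computation bounds the $\GL_k$-exponents; combining them one checks the tempered inequalities for $\wt{\J_n}$. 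The converse direction for $\Theta_{\psi,V,W}(\wt{\pi})$ is entirely symmetric, using the parallel Kudla filtration of $\mathrm{Jac}_{N}(\omega_{\psi,V,W})$ along parabolics of $\H_n$.

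The main obstacle I anticipate is bookkeeping rather than conceptual: one must track carefully the half-integral shifts and genuine characters (the $\gamma_\psi$-twists coming from the metaplectic cover) so that the exponents coming out of the filtration are matched against the correct tempered cone on the metaplectic side. A second delicate point is that Kudla's filtration gives an a~priori larger collection of potential exponents than those of $\Theta_{\psi,W,V}(\pi)$ itself, so one must argue that each constituent of $\mathrm{Jac}_{N'}(\Theta_{\psi,W,V}(\pi))$ really embeds into one of the successive quotients, and that the "extra" exponents introduced by the induction step $\mathrm{ind}_{Q_j}^{\H_n}$ still satisfy the required inequalities after being pulled back through Frobenius reciprocity. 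Once both points are addressed, Casselman's criterion delivers the conclusion and, by Howe duality (already invoked in defining $\Theta_\psi$), the irreducibility of the small theta lift converts this into the statement that $\Theta_{\psi,W,V}(\pi) \in \Irr_{\temp}(\wt{\J_n})$.
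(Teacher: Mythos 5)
The paper does not actually prove this statement: it is quoted directly from Gan--Ichino \cite[Theorem C.4]{GI1}, so there is no internal argument to compare against. Your strategy---Casselman's criterion combined with Kudla's filtration of the Jacquet modules of the Weil representation---is the standard route and is in the same spirit as the proof in the cited reference (where the square-integrable case is handled first and the tempered case is deduced using the compatibility of the theta lift with parabolic induction, again via Kudla's filtration). So the overall plan is sound, but two steps as you describe them have genuine gaps.

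First, the filtration of $\mathrm{Jac}_{N'}(\omega_{\psi,V,W})$ does not ``push through'' to a filtration of $\mathrm{Jac}_{N'}(\Theta_{\psi,W,V}(\pi))$ by exactness of the Jacquet functor: $\Theta_{\psi,W,V}(\pi)$ is the maximal $\pi$-isotypic quotient of $\omega_{\psi,V,W}$, and forming this quotient is right exact but not exact. What one actually has is a surjection $\mathrm{Jac}_{N'}(\omega_{\psi,V,W}) \twoheadrightarrow \pi \boxtimes \mathrm{Jac}_{N'}(\Theta_{\psi,W,V}(\pi))$, from which every irreducible subquotient of the right-hand side is an irreducible subquotient of some graded piece of Kudla's filtration. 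Extracting its central exponent then requires the geometric lemma (or second adjointness) applied to the induced modules $\mathrm{ind}_{Q_j}^{\H_n}(\cdots)$, not merely Frobenius reciprocity for quotients; this is precisely where the Casselman inequalities for $\mathrm{Jac}_{U_j}(\pi)$, i.e.\ the temperedness of $\pi$, must be fed in. You flag this as a delicate point but the mechanism you name (embedding into a successive quotient, plus Frobenius reciprocity) is not the one that closes it.

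Second, your final sentence does not yield the statement as written. Howe duality gives irreducibility of the \emph{small} theta lift $\theta_{\psi,W,V}(\pi)$ (the maximal semisimple quotient), whereas the theorem asserts that the \emph{big} lift $\Theta_{\psi,W,V}(\pi)$ lies in $\Irr_{\temp}(\wt{\J_n})$, i.e.\ is itself irreducible (and nonzero). Your exponent argument shows at best that every irreducible subquotient of $\Theta_{\psi,W,V}(\pi)$ is tempered; the irreducibility $\Theta_{\psi,W,V}(\pi)=\theta_{\psi,W,V}(\pi)$ and the nonvanishing in this (almost) equal rank case require separate arguments and are part of the content of \cite[Theorem C.4]{GI1} rather than consequences of Howe duality alone.
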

\begin{thm}[{\cite[Theorem 2.2]{JS03}}]\label{sup} Let $\pi \in \Irr_{\supc}(\H_n^{+})$ and  $\wt{\pi} \in \Irr_{\supc}(\wt{\J_n})$. Suppose that $\pi$ is generic and $\wt{\pi}$ is $\mu_1'$-generic. Then $\Theta_{\psi,W,V^+}(\pi)$ is supercuspidal and \[\Theta_{\psi,V^+,W}(\wt{\pi})=\begin{cases} \rm{Steinberg \ representation}, \ \rm{  if \ }  n=1  \rm{ \ and \ } \wt{\pi} \rm{ \ is \  odd \ Weil\ representation\ } \\ \rm{supercuspidal}, \quad \quad \quad \quad \ \ \rm{\ otherwise. }   \end{cases}\]
\end{thm}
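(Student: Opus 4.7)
The strategy I would follow is to translate supercuspidality into the vanishing of Jacquet modules along every proper parabolic, and then apply Kudla's filtration of the Jacquet modules of the Weil representation $\omega_{\psi,V,W}$ to transfer the computation to the dual group. Concretely, for each maximal parabolic $\wt{\P}'_k$ of $\wt{\J_n}$ with Levi $\wt{\GL_k}\times_{\{\pm 1\}}\wt{\J_{n-k}}$, Kudla's filtration gives a filtration of $J_{\wt{\P}'_k}(\omega_{\psi,V,W})$ whose successive quotients are parabolically induced from pieces built out of Jacquet modules of $\pi$ along orthogonal parabolics $\P_j$ of $\H_n$ (for $0\le j\le k$) together with smaller Weil representations $\omega_{\psi,V_{n-j},W_{n-k}}$.

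Taking the $\pi$-coisotypic quotient and using that $\pi$ is supercuspidal (so $J_{\P_j}(\pi)=0$ for every $j\ge 1$), all middle layers collapse, and the only graded piece that can survive is the bottom term, which is nonzero exactly when the theta lift $\Theta_{\psi,W_{n-k},V}(\pi)$ to a strictly smaller member of the symplectic tower is nonzero. Thus supercuspidality of $\Theta_{\psi,W,V}(\pi)$ reduces to the claim that the first occurrence of $\pi$ in the $\Mp$-tower lies exactly at level $n$.

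For this first-occurrence input I would invoke the preservation of genericity (Theorem~\ref{Theta1}): if the first occurrence happened at some $n'<n$, the resulting lift on $\wt{\J_{n'}}$ would be a nonzero generic supercuspidal representation, and combined with the Rallis tower argument and the conservation relation of Sun--Zhu this contradicts the size constraints forced by $\pi$ being generic of $\H_n$. The same Kudla-filtration analysis, with the roles of the orthogonal and symplectic sides exchanged, handles the statement for $\wt{\pi}$, except in the exceptional low-rank case $n=1$ with $\wt{\pi}$ the odd Weil representation of $\Mp_2$: there the lift to $\SO_3\simeq \mathrm{PGL}_2$ turns out to be the Steinberg representation rather than a supercuspidal one, and this case has to be verified by a direct calculation, e.g.\ by realizing $\wt{\pi}$ in a Kirillov-style model and computing the theta lift explicitly.

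The principal obstacle is the first-occurrence step, since every other input is a formal consequence of Kudla's filtration. Making that step rigorous requires the full strength of Theorem~\ref{Theta1} together with the tower/conservation machinery, and one must check that each ingredient remains valid when $\cha(F)=p$, consistently with the paper's goal of treating characteristic zero and positive characteristic on the same footing.
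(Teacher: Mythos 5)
First, a point of reference: the paper does not actually prove this statement — it is imported wholesale as \cite[Theorem 2.2]{JS03} and used as a black box — so there is no internal proof to measure your argument against. Your outline does follow the same general strategy as Jiang–Soudry's original proof (Kudla's filtration of the Jacquet modules of $\omega_{\psi,V,W}$ reduces supercuspidality of the lift to locating the first occurrence in the Witt tower), but the step where you pin down the first occurrence contains a genuine gap.

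You assert that if the first occurrence of $\pi$ happened at some $n'<n$, ``the resulting lift on $\wt{\J_{n'}}$ would be a nonzero generic supercuspidal representation.'' Supercuspidal, yes — that is Kudla's theorem at first occurrence — but generic does not follow. Preservation of genericity is available only at the equal-rank level $n$, because it rests on the twisted Jacquet module computation (Lemma~\ref{tjw} here, \cite[Proposition 2.1]{JS03}), which is specific to $\dim V=2n+1$, $\dim W=2n$; moreover Theorem~\ref{Theta1} goes in the direction $\Mp\to\SO$, not the direction you need for $\pi\in\Irr(\H_n)$. The Sun--Zhu conservation relation cannot rescue the step either: for $\pi$ on $\SO(V_n)$ it constrains the \emph{sum} of the first occurrences of the two extensions of $\pi$ to $\O(V_n)$, and gives no lower bound on the first occurrence of the relevant extension, which is exactly what you need. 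The argument that actually closes the gap (and is the one in \cite{JS03}) is: (a) the equal-rank lift $\Theta_n(\pi)$ is nonzero and generic by the Jacquet-module computation; (b) if the first occurrence were at $k_0<n$, Kudla's persistence principle identifies $\theta_n(\pi)$ with the Langlands quotient of a standard module induced from $|\cdot|^{1/2}\otimes\cdots\otimes|\cdot|^{n-k_0-\frac{1}{2}}\otimes\theta_{k_0}(\pi)$; (c) that standard module is reducible (the $\GL_1$-exponents form a segment), so its unique irreducible generic constituent is a proper subrepresentation and the Langlands quotient is non-generic — contradiction. The mirrored argument handles $\Theta_{\psi,V,W}(\wt{\pi})$, and the $n=1$ odd-Weil exception is precisely the boundary case where the analogue of (b) yields the generic subquotient (Steinberg) of a length-two induced representation rather than a non-generic quotient. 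No conservation relation enters, and the argument is characteristic-free once Lemma~\ref{tjw} is.
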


\begin{thm}\label{Theta1}
Let $\wt{\pi} \in \Irr_{\temp}(\wt{\J_n})$. If $\wt{\pi}$ is $\mu_1'$-generic, then $\Theta_{\psi}(\wt{\pi})$ is a tempered generic representation of the split odd orthogonal group $\H_n^{+}(F)$.
\end{thm}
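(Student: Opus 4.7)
The plan is to reduce Theorem~\ref{Theta1} to a twisted Jacquet module computation for the Weil representation. Temperedness of $\Theta_\psi(\wt{\pi})$ is free from Theorem~\ref{tem0}, since $\Theta_\psi(\wt{\pi})\neq 0$ is built into the definition of $\Theta_\psi$. Moreover, once genericity of $\Theta_\psi(\wt{\pi})$ is established, it must live on the split side $\H_n^+(F)$, because $\H_n^-$ is not quasi-split and admits no standard Whittaker model. So the only remaining content is to produce a $\mu$-Whittaker functional on $\Theta_\psi(\wt{\pi})$.

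To produce such a functional, I would combine the tautological surjection $\omega \twoheadrightarrow \wt{\pi} \boxtimes \Theta_\psi(\wt{\pi})$ — which defines the theta lift of $\wt{\pi}$ — with the $\mu_1'$-Whittaker functional on $\wt{\pi}$ furnished by the hypothesis. Applying the doubly twisted Jacquet functor $(\,\cdot\,)_{\U \times \U',\, \mu \boxtimes \mu_1'}$ produces a surjection
\[
\omega_{\U \times \U',\, \mu \boxtimes \mu_1'} \;\twoheadrightarrow\; \wt{\pi}_{\U',\,\mu_1'}\otimes\Theta_\psi(\wt{\pi})_{\U,\,\mu}.
\]
Since $\wt{\pi}_{\U',\,\mu_1'}\neq 0$ by hypothesis, then provided the left-hand side is nonzero and the surjection is not the zero map, we conclude $\Theta_\psi(\wt{\pi})_{\U,\,\mu}\neq 0$, i.e., genericity of $\Theta_\psi(\wt{\pi})$.

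The decisive step, and the content of Section~4, is thus the computation of $\omega_{\U',\,\mu_1'}$ as an $\H_n^+(F)$-module (equivalently, of the doubly twisted module above), showing that it carries an explicit nonzero $(\U,\mu)$-Whittaker functional. My approach would be to realize $\omega$ on a mixed Schr\"odinger model, apply partial Fourier transforms so that the $\U'$-action becomes multiplication by $\mu_1'$, and then stratify along Bruhat cells on the remaining variables to identify $\omega_{\U',\,\mu_1'}$ as a ``Whittaker induced'' representation of $\H_n^+(F)$. The main obstacle is this Jacquet-module computation itself: tracking the Weil factors $\gamma_\psi$ across successive Fourier transforms, handling the Bruhat combinatorics correctly, and — since the paper works uniformly over residual characteristic $2$ and positive characteristic $F$ — verifying that the Fourier-analytic manipulations remain valid in these less-standard settings.
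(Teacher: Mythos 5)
Your high-level plan — temperedness for free from Theorem~\ref{tem0}, split-ness of the target forced by the existence of a Whittaker model, and reduction of genericity to a twisted Jacquet module computation for $\omega_{\psi,V,W}$ in a mixed Schr\"odinger model with an orbit stratification — matches the shape of the paper's argument (Lemma~\ref{tjw} and Section~4). But your mechanism for extracting genericity has a genuine gap. Applying the exact functor $(\,\cdot\,)_{\U\times\U',\,\mu\boxtimes\mu_1'}$ to the tautological surjection $\omega\twoheadrightarrow\wt{\pi}\boxtimes\Theta_{\psi,V,W}(\wt{\pi})$ always yields a surjection, and that surjection is nonzero exactly when its target is nonzero; so your proviso ``provided the surjection is not the zero map'' is precisely the assertion $\Theta_{\psi,V,W}(\wt{\pi})_{\U,\mu}\ne 0$ that you are trying to prove, and nonvanishing of $\omega_{\U\times\U',\,\mu\boxtimes\mu_1'}$ by itself gives nothing, since all of it could lie in the kernel. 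The paper sidesteps this by working with Hom spaces rather than surjections: from the definition of the big theta lift as the maximal $\wt{\pi}$-isotypic quotient one has $\Hom_{\wt{\J_n}\times\U}(\omega,\wt{\pi}\otimes\mu^{-1})\cong\Hom_{\U}(\Theta_{\psi,V,W}(\wt{\pi}),\mu^{-1})$, and the left-hand side is computed \emph{exactly} as $\Hom_{\U'}(\wt{\pi},\mu_1')$ via Lemma~\ref{tjw}, Frobenius reciprocity for $\ind$, and unitarity.

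Relatedly, the Jacquet module you propose to compute is the wrong one for this direction, and it is not ``equivalent'' to the doubly twisted module (the latter is a further quotient of either single one and determines neither). To control $\Hom_{\U}(\Theta_{\psi,V,W}(\wt{\pi}),\mu^{-1})$ one needs the Jacquet module with respect to the unipotent subgroup of the \emph{target} group, namely $(\omega_{\psi,V,W})_{\U,\mu^{-1}}$, which is a representation of the \emph{source} group $\wt{\J_n}$ and is identified in Lemma~\ref{tjw} with the Gelfand--Graev representation $\ind_{\U'}^{\wt{\J_n}}(\mu_1')$. The module $\omega_{\U',\mu_1'}$ you describe, viewed as an $\H_n^{+}$-module, governs the opposite direction (genericity of lifts from $\H_n$ to $\wt{\J_n}$, i.e.\ the statement of [JS03, Cor.~2.1(1)] quoted in the remark following the theorem). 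Your route could be repaired — compose $\omega\twoheadrightarrow\wt{\pi}\boxtimes\Theta_{\psi,V,W}(\wt{\pi})$ with a $\mu_1'$-Whittaker functional on $\wt{\pi}$ to realize $\Theta_{\psi,V,W}(\wt{\pi})$ as a quotient of $\omega_{\U',\mu_1'}$, prove that this module is $\ind_{\U}^{\H_n^{+}}(\mu)$, and check that every irreducible admissible quotient of $\ind_{\U}^{\H_n^{+}}(\mu)$ is $\mu$-generic — but this is a different Jacquet-module identity from the one in Section~4, and neither it nor the quotient argument is supplied in your proposal.
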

When $\mathrm{char}(F) = 0$, the theorem above can be seen as an extension of \cite[Corollary 2.1 (2)]{JS03}, expanding from supercuspidal to tempered representations. (In fact, \cite[Corollary 2.1 (2)]{JS03} provides even more.) Moreover, when the residual characteristic of $F$ is odd, this result is stated in \cite[Corollary~9.3 (ii)]{GS}, without proof. To demonstrate that their omitted proof also applies to the case $\mathrm{char}(F) = p$, as well as $\mathrm{char}(F) = 0$, we present a proof here. This proof is based on the following lemma, which calculates the twisted Jacquet module of the Weil representation as stated (but not proven) in \cite[Proposition~9.2]{GS}.

\begin{lem}[{\cite[Proposition~{9.2}]{GS}}]\label{tjw}
Let $(\omega_{\psi,V^{+},W})_{\U,\mu^{-1}}$ be the twisted Jacquet module of $\omega_{\psi,V^{+},W}$ with respect to $\U$ and $\mu^{-1}$. Then the following holds:
\[(\omega_{\psi,V^{+},W})_{\U,\mu^{-1}} \cong \ind_{\U'}^{\wt{\J_{n}}} (\mu_1').\]
\end{lem}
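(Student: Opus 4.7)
The plan is to realize $\omega=\omega_{\psi,V,W}$ in a concrete Schr\"odinger model and compute the $(\U,\mu^{-1})$-twisted coinvariants by an explicit orbit analysis. I would choose the Lagrangian polarization of $V\otimes W$ given by $L=V\otimes X_n$, identifying $\omega\simeq \mathcal{S}(V\otimes X_n^*)\cong \mathcal{S}(V^n)$ via $(v_1,\ldots,v_n)\leftrightarrow \sum_i v_i\otimes f_i^*$. In this model $h\in\H_n^+=\SO(V)$ acts by the diagonal translation $(h\cdot\phi)(v_1,\ldots,v_n)=\phi(h^{-1}v_1,\ldots,h^{-1}v_n)$, so in particular $\U$ acts geometrically; the Siegel parabolic of $\wt{\J_n}$ preserving $X_n^*$ acts by $\chi_\psi$-twisted $\GL(X_n^*)$-characters together with variable changes on the $n$ indices, and the Weyl element swapping $X_n$ and $X_n^*$ acts by an $n$-fold Fourier transform in the $V$-variables.

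Next I would analyse the $\U$-orbits on $V^n$. Writing $v_i=\sum_{j}a_{ij}e_j+b_ie+\sum_{j}c_{ij}e_j^*$ gives coordinates on $V^n$ in which $\U$ acts by upper-triangular column/row operations. The open orbit $V^n_{\mathrm{open}}$ is cut out by the non-degeneracy condition that the matrix $(c_{ij})$ is invertible; direct column reduction shows $\U$ acts freely there. On any smaller stratum the $\U$-stabilizer contains a one-parameter subgroup on which $\mu$ is non-trivial, so functions supported on the non-generic locus die in the twisted coinvariants. Consequently $(\omega_{\psi,V,W})_{\U,\mu^{-1}}$ is realized as the space of compactly supported $\mu$-equivariant functions on $V^n_{\mathrm{open}}$, naturally a $\wt{\J_n}$-module via the Weil representation formulas.

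Finally I would identify this $\wt{\J_n}$-module with $\ind_{\U'}^{\wt{\J_n}}(\mu_1')$. The dimension match $\dim(\U\backslash V^n_{\mathrm{open}})=n(2n+1)-n^2=n^2+n=\dim(\wt{\J_n}/\U')$ singles out a distinguished base point $\mathbf{v}_0\in V^n_{\mathrm{open}}$ (a Whittaker-type configuration); transporting the $\wt{\J_n}$-action through the corresponding slice, the stabilizer of $\mathbf{v}_0$ in the twisted Jacquet module becomes exactly $\U'\subset\wt{\J_n}$. For the character matching, the simple-root terms $\psi(\langle ue_{i+1},e_i^*\rangle_V)$ of $\mu$ correspond to the terms $\psi(\langle u'f_{i+1},f_i^*\rangle_{W_n})$ of $\mu_1'$, while the anomalous short-root term $\psi(\langle ue,e_n^*\rangle_V)$ in $\mu$ produces the term $\psi(\tfrac12\langle u'f_n^*,f_n^*\rangle_{W_n})$ in $\mu_1'$, with the factor $\tfrac12$ coming from the metaplectic cocycle and the normalization of $\chi_\psi$.

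The main obstacle is precisely this last step: the orbit stratification and dimension count are essentially mechanical, but tracking the Weil-cocycle normalizations carefully enough to reproduce $\mu_1'$ exactly (and not some $\T'(F)$-twist of it) is delicate, and one must in addition verify the vanishing on non-generic strata uniformly rather than orbit-by-orbit, using that the only $\U$-fixed characters compatible with $\mu$ arise from the generic slice.
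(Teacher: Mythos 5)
Your overall strategy (choose a model of $\omega_{\psi,V,W}$, stratify by $\U$-orbits, discard the non-generic strata, identify the open stratum) is the right shape, but your choice of polarization $V\otimes X_n$ is genuinely different from the paper's and it breaks the argument at the decisive step. The paper uses the \emph{mixed} model attached to $(Y_n^*\otimes W)\oplus(e\otimes X_n)$, realizing $\omega$ on $\mathcal{S}(W^n)\otimes\mathcal{S}(X_n)$. In that model the subgroup $\N_n\subset\U$ acts by the character $\psi(\tfrac12\mathrm{tr}(Gr(\mathbf{w})\varpi s))$ and $\Hom(e,Y_n)$ by $\psi(\sum_i t_i\langle w_{n+1-i},x\rangle_W)$, and these character-valued actions are exactly what kill the non-isotropic locus and pin the $X_n$-variable; moreover $\wt{\J_n}$ acts \emph{geometrically} on $W^n$, so the surviving open set is a single $\Z_n(F)\times\wt{\J_n}(F)$-orbit and the lemma reduces to computing one stabilizer and one character. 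In your pure Schr\"odinger model on $\mathcal{S}(V^n)$ all of $\U$ acts geometrically with no character twist, and your vanishing claim for the non-open strata is false. Already for $n=1$: take $v=be$ with $b\ne 0$, so $(c_{ij})=0$ and $v$ lies off your open stratum. Writing $u_x(e_1)=e_1$, $u_x(e)=e-xe_1$, $u_x(e_1^*)=e_1^*+xe-\tfrac{x^2}{2}e_1$, one gets $u_x(be)=be-bxe_1$, so the stabilizer of $v$ in $\U$ is \emph{trivial}; the orbit is a $\U$-torsor and its $(\U,\mu^{-1})$-coinvariants are one-dimensional, not zero. Hence the stratum $\{c=0,\,b\ne 0\}$ contributes $\mathcal{S}(F^\times)$ to the twisted Jacquet module, and your principle that ``the only $\U$-fixed characters compatible with $\mu$ arise from the generic slice'' does not hold: orbits with trivial (or merely $\mu$-trivial) stabilizer survive regardless of whether they are generic. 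The same phenomenon occurs for all $n$ on strata where the $v_i$ have nontrivial components along $Y_n\oplus Fe$ but degenerate $e_j^*$-components.

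Even if one granted the vanishing, the final identification is not merely ``delicate'' in your model but structurally obstructed: $\wt{\J_n}$ does not act geometrically on $V^n$ (the Siegel parabolic acts by characters and substitutions, but the long Weyl element acts by an $n$-fold Fourier transform), so the $\U$-stratification is not $\wt{\J_n}$-stable, the pieces of your filtration are not $\wt{\J_n}$-modules, and transporting the action to a slice of $\U\backslash V^n_{\mathrm{open}}$ does not produce $\ind_{\U'}^{\wt{\J_n}}(\mu_1')$ orbit by orbit. This is precisely what the mixed model buys: the orbit analysis takes place on $W^n$, where the $\U$-side character actions do the killing and the $\wt{\J_n}$-side action is by right translation, so that the open orbit is literally $\U'\backslash\wt{\J_n}$ and the residual term $\psi(\tfrac12\langle u'f_n^*,f_n^*\rangle_{W_n})$ of $\mu_1'$ falls out of the action of $\omega_{\psi,W}$ on $\mathcal{S}(X_n)$ evaluated at $f_n$. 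To repair your argument you would essentially have to redo it in the mixed model.
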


The proof of Lemma~\ref{tjw} will be provided in the next section. We can establish Theorem~\ref{Theta1} assuming Lemma~\ref{tjw} as follows.

\begin{proof}[Proof of Theorem~\ref{Theta1}]
Write $(\omega_{\psi,V^{+},W})_{\U,\mu^{-1}}$ for the twisted Jacquet module of $\omega_{\psi,V^{+},W}$ with respect to $\U$ and $\mu^{-1}$. By Lemma~\ref{tjw}, we have
\begin{align*}
\Hom_{\wt{\J_n} \times \U}\big(\omega_{\psi,V^{+},W}, \ \wt{\pi} \otimes \mu^{-1}\big) &\cong \Hom_{\wt{\J_n}}\big((\omega_{\psi,V^{+},W})_{\U,\mu^{-1}}, \ \wt{\pi}\big)\\
&\cong \Hom_{\wt{\J_n}}\big(\ind_{\U'}^{\wt{\J_n}} (\mu_1'), \ \wt{\pi} \big)\\
&\cong \Hom_{\U'}\big(\mu_1', \ (\wt{\pi}^{\vee}\mid_{\U'})^{\vee} \big)\\
&\cong \Hom_{\U'}\big(\wt{\pi}^{\vee}, \ (\mu_1')^{\vee} \big)\\
&\cong \Hom_{\U'}\big(\wt{\pi}, \ \mu_1' \big),
\end{align*}
where the last equality follows from the facts that $\wt{\pi}$ and $\mu_1'$ are unitary.

On the other hand,
\[
\Hom_{\wt{\J_n} \times \U}\big(\omega_{\psi,V^{+},W}, \ \wt{\pi} \otimes \mu^{-1}\big)\cong \Hom_{\U}\big(\Theta_{\psi,V^{+},W}(\wt{\pi}), \  \mu^{-1}\big).
\]

If $\wt{\pi}'$ is $\mu_1'$-generic, then $\Hom_{\U'}\big(\wt{\pi}', \ \mu_1' \big) \ne 0$, and thus $\Hom_{\U}\big(\Theta_{\psi,V^{+},W}(\wt{\pi}'), \  \mu^{-1}\big) \ne 0$. This, combined with Theorem~\ref{tem0}, proves the theorem and $\Theta_{\psi,V^{+},W}(\wt{\pi}')$ is a representation of $\H_n^{+}$ because it is generic.
\end{proof}

\begin{rem}
The opposite direction of Theorem~\ref{Theta1}, that is starting with a tempered generic representation of $\H_n^{+}(F)$ and its theta lift to $\wt{\J_n}(F)$ is generic, is proved in greater generality in \cite[Corollary~{2.1 (1)}]{JS03}. 
Note that when $n=1$, even though $\wt{\pi}$ is $\mu_1'$-generic and a supercuspidal representation, $\Theta_{\psi}(\wt{\pi}')$ might not be supercuspidal. For example, if $\wt{\pi}$ is the odd Weil representation, then $\Theta_{\psi}(\wt{\pi}')$ is a Steinberg representation. For this reason, Jiang and Soudry separated the proof of the LCT for $\H_n^{+}$ according to whether $n=1$ or $n \ge 2$. In this paper, to ensure our arguments also work for $n=1$ cases, we reduce the proof of our main theorem to the tempered cases, not the supercupidal cases. This contrasts with the approach taken in \cite{JS03}, where they reduced it to the supercuspidal cases. This is one of the delicate parts in this paper.
\end{rem}

Consider an induced representation 
\[
\Ind_{\cl{Q}}^{\Mp(W)}
((\tau_1|\det|_F^{s_1})_{\psi}\otimes\cdots\otimes(\tau_r|\det|_F^{s_r})_{\psi}\otimes \cl{\pi}_0),
\]
where the following conditions hold:
\begin{itemize}
\item $W=W_{n}$ and $W_{n_0}$ are symplectic spaces of dimension $2n$ and $2n_0$, respectively.
\item $\cl{Q}$ is a parabolic subgroup of $\Mp(W)$ with the Levi subgroup isomorphic to $\cl{\GL}_{n_1}\times_{\{\pm1\}} \cdots\times_{\{\pm1\}}\cl{\GL}_{n_r}\times_{\{\pm1\}} \Mp(W_0)$.
\item $\tau_i$ is an irreducible unitary supercuspidal representation of $\GL_{n_i}(F)$.
\item $s_i$ is a real number such that $s_1 \geq \dots \geq s_r > 0$.
\item $\cl{\pi}_0$ is an irreducible $\mu_1'$-generic supercuspidal (resp. tempered) representation of $\Mp(W_0)(F)$.
\end{itemize}

An irreducible \( \mu_1' \)-generic representation of \( \wt{\J_n}(F) \) is a subquotient of such an induced representation (cf.~\cite[p.~777]{JS03}). In this case, we say that it has supercuspidal (resp. tempered) support \( (\tau_1, \ldots, \tau_r; \wt{\pi}_0) \) and exponents \( (s_1, \ldots, s_r) \). Sometimes, we omit mentioning the exponents, in which case we simply say that 
\[
(\tau_1 \cdot |\cdot|_F^{s_1}, \ldots, \tau_r \cdot |\cdot|_F^{s_r}; \wt{\pi}_0)
\]
is a supercuspidal (resp.~tempered) support of the given representation. The supercuspidal (resp.~tempered) support and the exponents of an irreducible \( \mu_1' \)-generic representation of \( \wt{\J_n}(F) \) are uniquely determined up to permutation (see Corollary~\ref{uniq}). The notions of supercuspidal (resp.~tempered) support and exponents for an irreducible generic representation of \( \H_m^{+}(F) \) are defined similarly.

The following is a special case of Kudla's supercuspidal support theorem.

\begin{thm} [{\cite[Theorem 7.1, Theorem 7.2]{Ku} or \cite[Proposition 5.2]{GI1}}]\label{ksc} 
Let $\wt{\pi} \in \Irr(\wt{\J_n})$ be a $\mu_1'$-generic representation. Suppose that $\Theta_{\psi,V_m^{+},W}(\wt{\pi})$ is nonzero. Put $\pi=\Theta_{\psi,V_m^{+},W}(\wt{\pi})$. Assume that $\wt{\pi} \in \Irr(\wt{\J_n})$  has supercuspidal support $(\wt{\tau_1},\cdots,\wt{\tau_n};\wt{\pi_0})$, where $\wt{\pi_0} \in \Irr(\wt{\J_{0}})$ is the trivial representation and $\tau_i$'s are characters of $\GL_1(F)$. Write the supercuspidal support of $\pi \in \Irr(\H_m^{+})$ as $(\tau_1,\cdots,\tau_r;\pi_0)$, where $\pi_0 \in \Irr(\H_{m_0}^{+})$. Then for each $m=n,n-1$, the following holds:
\begin{enumerate}
\item $r=m$ and $m_0=0$.
\item $\pi_0$ is the trivial representation
\item $\{ \wt{\tau_1},\cdots,\wt{\tau_n}\}= \begin{cases}\{\tau_1,\cdots,\tau_n \},\quad \quad \quad \quad  \ \rm{if}\  m=n  \\
\{|\cdot|_F^{\frac{1}{2}}
,\tau_1,\cdots,\tau_{n-1}\},\quad  \ \rm{if} \  m=n-1.
\end{cases}$
\end{enumerate}
\end{thm}
By applying Kudla's supercuspidal support theorem, we can establish that $\Theta_{\psi}$ preserves $\gamma$-factors.

\begin{thm} \label{Theta} For $\wt{\pi} \in \Irr_{\gen}(\wt{\J_n})$, suppose that $\pi=\Theta_{\psi}(\wt{\pi})$ is generic. Then for any irreducible generic representation $\sigma$ of $\GL_r(F)$,
    \[\gamma(s,\wt{\pi} \times \sigma, \psi)=\gamma(s,\pi \times \sigma,\psi).    
    \]
\end{thm}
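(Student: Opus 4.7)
The plan is to reduce the statement to a comparison of $\gamma$-factors of supercuspidal representations via multiplicativity and Kudla's theorem, and then to establish the supercuspidal case by a global functional-equation argument.

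\textbf{Reduction to the supercuspidal case.} With notation as in Theorem~\ref{ksc}, let $(\wt{\rho_1},\dots,\wt{\rho_r};\wt{\pi_0})$ and $(\rho_1,\dots,\rho_s;\pi_0)$ be the supercuspidal supports of $\wt{\pi}$ and of $\pi=\Theta_\psi(\wt{\pi})$, respectively. Genericity of $\wt{\pi}$ and $\pi$ forces $\wt{\pi_0}$ and $\pi_0$ to be generic in their respective senses. By Theorem~\ref{ksc} either $m_0=n_0$ with $\pi_0=\theta_{\psi}(\wt{\pi_0})$ and $\{\rho_i\}=\{\wt{\rho_i}\}$ (Case~1), or $m_0=n_0-1$ with $\{\rho_i\}=\{\wt{\rho_i}\}\cup\{|\cdot|^{1/2}\}$ (Case~2). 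Applying the multiplicativity formula (Proposition~\ref{gamma}(ii)) to both $\gamma(s,\wt{\pi}\times\sigma,\psi)$ and $\gamma(s,\pi\times\sigma,\psi)$, the identical $\GL$-factors coming from the $\wt{\rho_i}\leftrightarrow\rho_i$ correspondence cancel. This reduces the statement to: in Case~1, the identity $\gamma(s,\wt{\pi_0}\times\sigma,\psi)=\gamma(s,\theta_{\psi}(\wt{\pi_0})\times\sigma,\psi)$; and in Case~2, the identity $\gamma(s,\wt{\pi_0}\times\sigma,\psi)=\gamma(s,\pi_0\times\sigma,\psi)\cdot\gamma(s+\tfrac12,\sigma,\psi)\,\gamma(s-\tfrac12,\sigma,\psi)$, where the extra factors arise from the unramified-twist property~(i).

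\textbf{Globalization in the supercuspidal case.} Choose a global field $K$ with a place $v_0$ such that $K_{v_0}\cong F$, and construct cuspidal $\mu_1'$-generic automorphic representations $\wt{\Pi_0}$ of $\wt{\J}_{n_0}(\A)$ and $\Sigma$ of $\GL_r(\A)$ whose local components at $v_0$ recover $\wt{\pi_0}$ and $\sigma$, and which are unramified outside a controlled finite set $S\ni v_0$. By the compatibility of local and global theta correspondence, together with the non-vanishing at $v_0$ guaranteed by Theorem~\ref{sup}, the global theta lift $\Pi_0$ is a cuspidal generic automorphic representation of $\H_{m_0}(\A)$, and $(\Pi_0)_v=\theta_{\psi}((\wt{\Pi_0})_v)$ at every place $v$. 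Applying the global functional equation (Proposition~\ref{gamma}(v)) to both $\wt{\Pi_0}\times\Sigma$ and $\Pi_0\times\Sigma$, the unramified local $\gamma$-factors and partial $L$-functions outside $S$ match by property~(iv) together with the well-known matching of Satake parameters under unramified theta correspondence. Hence the product of local $\gamma$-factors over $v\in S$ agrees on both sides.

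\textbf{Isolating $v_0$ and the main obstacle.} To extract the identity at $v_0$, arrange the globalization so that for each $v\in S\setminus\{v_0\}$ both $(\wt{\Pi_0})_v$ and $(\Pi_0)_v$ are irreducible generic unramified principal series in sufficiently general position, whence the local $\gamma$-factor equality at those places follows from the reduction step applied inductively (or by direct computation with unramified data). Combining this with the previous paragraph yields the desired identity at $v_0$ in Case~1; Case~2 follows in parallel, with the extra factors $\gamma(s\pm\tfrac12,\sigma,\psi)$ absorbed geometrically via the ``going-down'' tower in the Witt tower picture. \textbf{The main obstacle} is the joint globalization step: one must simultaneously realize $\wt{\pi_0}$ as the $v_0$-component of a cuspidal $\mu_1'$-generic $\wt{\Pi_0}$, control the local components at the other places in $S$, and guarantee that the global small theta lift is cuspidal and generic. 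This is the most delicate point and relies on Poincar\'e-series or trace-formula style globalization of generic supercuspidals together with non-vanishing criteria for global theta lifts, where the supercuspidality of $\wt{\pi_0}$ and Theorem~\ref{tem0} play essential roles.
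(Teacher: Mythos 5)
Your outline---multiplicativity plus Kudla's supercuspidal support theorem (Theorem~\ref{ksc}) to reduce to the supercuspidal case, then globalization and the global functional equation to isolate $v_0$---is exactly the strategy of the paper's proof, but the proposal has genuine gaps. The most serious one concerns the places of $S$ other than $v_0$. In characteristic $0$ the set $S$ necessarily contains the archimedean places, where the local components are not ``unramified principal series in general position,'' so neither your Satake-parameter matching nor your ``inductive reduction'' applies there; without matching the local $\gamma$-factors at those places the cancellation in the two global functional equations does not isolate $v_0$. The paper deals with this by taking $K$ totally imaginary, invoking the explicit description of the complex theta correspondence due to Adams--Barbasch and the archimedean property (vi) of Proposition~\ref{gamma}. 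Relatedly, the paper first establishes a purely local base case---the identity for $\sigma$ a character of $\GL_1$, hence for every $\sigma$ with an Iwahori-fixed vector, via the doubling-method result \cite[Theorem~11.5]{GI1}---and this is the nonformal local input that feeds the matching at the auxiliary places; your proposal supplies no local input of this kind.

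The second gap is your Case~2, i.e.\ $m_0=n_0-1$ (for supercuspidal $\wt{\pi}$ this is the $n=1$ odd Weil representation, whose lift is Steinberg). ``Absorbed geometrically via the going-down tower'' is not an argument, and, more seriously, the cuspidality of the global theta lift $\Xi$---which you need in order to write the functional equation on the orthogonal side at all---is deduced from the tower property precisely because $\pi_0=\theta_{\psi}(\wt{\pi_0})$ is supercuspidal; when the local lift goes down the tower this deduction fails. The paper treats this case separately by introducing a second auxiliary finite place $v_1$ carrying a supercuspidal datum $\theta_{\psi,W,V}(\pi_1)$, which forces cuspidality of the global lift, and then cancels the already-established identities at all $v\neq v_0$. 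A third, smaller, point: to globalize $\sigma$ into a cuspidal $\Sigma$ that is unramified outside a finite set you need $\sigma$ supercuspidal; the paper first reduces to supercuspidal $\sigma$ by multiplicativity in the $\GL$-variable, a step your reduction omits.
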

\begin{rem}\label{err}
   When $\cha(F)=0$, this theorem is stated in \cite[Proposition~{11.1}]{GS} and the discussion after \cite[Corollary~{11.2}]{GS}. In the proof provided by \cite[Proposition~11.1]{GS}, they construct first $\Pi$ (the globalization of $\pi$) and consider its first occurrence, denoted as $\Theta_{k_0}(\Pi)$, in the global theta lifting tower from $\H_n^{+}(\A)$ to the tower of $\wt{\J_{2k}}(\mathbb{A})$. They then compare the $\gamma$-factors of $\Pi_v$ and $\Theta_{k_0}(\Pi)_v$ at unramified places. However, the genericity of $\Theta_{k_0}(\Pi)_v$ remains uncertain, as there is no guarantee that $\Theta_{k_0}(\Pi)$ is globally generic. Since local $\gamma$-factors of $\wt{\J_{k_0}}(F_v)$ are only defined for generic representations, we cannot directly compare the local $\gamma$-factors of $\Pi_v$ and $\Theta_{k_0}(\Pi)_v$ if $\Theta_{k_0}(\Pi)_v$ happens to be non-generic. Therefore, we provide a detailed proof circumventing this issue. 
\end{rem}

\begin{proof}[Proof of Theorem~\ref{Theta}]
We begin by proving the case when $\wt{\pi}$ and $\sigma$ have  nonzero Iwahori-fixed vectors. In this case, we can express $\sigma$ as a submodule of a principal series representation of $\GL_r(F)$. By utilizing the multiplicative property of $\gamma$-factors, it suffices to consider the case when $\sigma$ is a $1$-dimensional unramified character of $F^{\times}$. 

Assume that $(\wt{\rho_1},\cdots,\wt{\rho_r};\wt{\pi}_0)$  and $(\rho_1,\cdots,\rho_s;\pi_0)$ are the supercuspidal supports of $\wt{\pi}$ and $\pi$, respectively. 
Since $V_{0}^{+}$ (resp. $W_{0}$) is the anisotropic kernel of $V^{+}$ (resp. $W$), $\wt{\pi}_0=\mathbb{I}_{V_{0}^{+}}$ (resp. $\pi_0=\mathbb{I}_{W_{0}}$) is the trivial representation of $\H_{0}^{+}(F)$ (resp. $\wt{\J_{0}}(F)$). Furthermore, $\wt{\rho_i},\rho_j$ are 1-dimensional unramified character of $\GL_1(F)$.
Then by Proposition~\ref{gamma} (ii) and Theorem~\ref{ksc},
\begin{align*}\gamma(s,\pi \times \sigma,\psi)&=\gamma(s,\mathbb{I}_{V_{0}^{+}} \times \sigma,\psi)\cdot \prod_{i=1}^s \gamma(s,\rho_i \times \sigma,\psi)\cdot \gamma(s,\rho_i^{\vee} \times \sigma,\psi) \\ &=\gamma(s,\mathbb{I}_{W_{0}} \times \sigma,\psi)\cdot \prod_{i=1}^r \gamma(s,\wt{\rho_i} \times \sigma,\psi)\cdot \gamma(s,(\wt{\rho_i})^{\vee} \times \sigma,\psi) \\ &=\gamma(s,\wt{\pi} \times \sigma,\psi). \end{align*}
Next, we consider the general case. Again, using the multiplicative property of $\gamma$-factors, we only need to consider the case when both $\wt{\pi}$ and $\sigma$ are supercuspidal. At this point, we employ a global-to-local argument. Let us introduce the following data:

\begin{itemize}
\item $K$: a totally imaginary number field (or a global function field) with $\A$ as its ad\`{e}le ring such that $K_{v_0}=F$ for a specific finite place $v_0$ of $K$.
 \item $\Psi$: a nontrivial additive character of $K \bs \A$ satisfying $\Psi_{v_0}=\psi$.
\item $\mf{U}'$: a nontrivial additive generic character of $\U'(K) \bs \U'(\A)$ associated with $\Psi$, such that $\mf{U}_{v_0}'=\mu_{1}'$.
\item $\mathbb{\WW}$: a symplectic space over $K$ of dimension $2n$, with the associated isometry group $\Sp(\WW)$.
\item $\mathbb{V}$: a split symmetric space over $K$ of dimension $2n+1$, such that $\mathbb{V}_{v_0}=V^+$, with the associated isometry group $\SO(\VV)$.
\item $\{\VV_k\}_{k \ge 0}$: the Witt tower over $K$ containing $\VV_n=\VV$.
\item $\Sigma$: a globally generic cuspidal representation of $\GL_r(\A)$, such that $\Sigma_{v_0}=\sigma$, and for all finite places $v \neq v_0$ of $K$, the local components $\Sigma_{v}$ are unramified.
\end{itemize}

The existence of such $K$ follows from a similar argument in \cite[Lemma~{5.2}]{MS20}. The construction of such $\Sigma$ is indeed possible, as stated in \cite[Theorem 4.1]{PS} for the case when $\cha(F)=0$ and in \cite[Theorem 1.1]{GL} for the case when $\cha(F)=p$.

To prove the desired result, we consider the cases where $\pi = \Theta_{\psi}(\wt{\pi})$ is either a supercuspidal representation or a Steinberg representation. We start by addressing the case in which $\pi$ is supercuspidal.

Construct a globally $\mf{U}'(\AA)$-generic cuspidal representation $\Pi$ of $\Mp(\WW)(\AA)$, adapting the constructions from \cite[Theorem 4.1]{PS} and \cite[Theorem 1.1]{GL} to metaplectic groups. This representation $\Pi$ satisfies $\Pi_{v_0}=\wt{\pi}$ and for all finite places $v \ne v_0$ of $K$, $\Pi_v$ is unramified.

Let $\Xi$ be the global theta lift of $\Pi$ to $\SO(\VV)(\A)$. Using \cite[Proposition 3]{Fu}, we know that $\Xi$ is nonzero and generic. We claim that $\Xi$ is cuspidal. Suppose not, which means that there exists an integer $m<n$ and a $(2m+1)$-dimensional symmetric space $\VV_m$ over $K$ such that the global theta lift $\Theta_m(\Pi)$ of $\Pi$ to $\SO(\VV_m)(\A)$ is cuspidal. Let $\Theta_m(\Pi)_0$ be an irreducible summand of $\Theta_m(\Pi)$. Then $\Theta_m(\Pi)_{0,v_0}$ is the nonzero local theta lift of $\wt{\pi}$ to $\SO(\VV_{m})(F)$. However, this contradicts the tower property of local theta lifts (see \cite[Theorem~{6.1}]{Ku}) since $\pi$ is a supercuspidal representation of $\H_n^+(F)$. Therefore, we have shown that $\Xi$ is a cuspidal representation of $\SO(\VV)(\A)$ with $\Xi_{v_0}=\theta_{\psi,V,W}(\wt{\pi})$. 

Let $\Xi_0$ be an irreducible generic constituent of $\Xi$. (Indeed, $\Xi$ is irreducible by the multiplicity one property in the Howe duality theorem.) By our first claim, for all unramified places $v$ of $K$, we have
\[
\gamma(s,\Xi_{0,v} \times \Sigma_{v},\Psi_v)=\gamma(s,\Pi_v \times \Sigma_{v},\Psi_v).
\]
For any archimedean place $v$ of $K$, $K_v = \mathbb{C}$. The theta correspondence for complex groups is well-understood and can be described in terms of the local Langlands correspondence (see \cite{AB95}). Applying the archimedean property of $\gamma$-factors, we have the above identity at the archimedean places of $K$ as well.

Put $S_0$ the set of all archimedean places of $K$ and $S=S_0 \cup \{v_0\}$ (In $\cha(F)=p$ cases, put $S=\{v_0\}$). Note that for all places $v \notin S$, $\Xi_{0,v}, \Pi_v$ and $\Sigma_v$ are unramified, and for all archimedean places $v$, $K_v=\mathbb{C}$. By the global functional equation of $\gamma$-factors,
\begin{align*}
1=&\,\prod_{v\in S}\gamma(s,\Xi_{0,v} \times \Sigma_{v},\Psi_{v}) \cdot \frac{L^S(1-s,\Xi_0^{\vee} \times \Sigma^{\vee})}{L^S(s,\Xi_0 \times \Sigma)} \\
=&\,\prod_{v\in S}\gamma(s,\Pi_{v}\times \Sigma_{v},\Psi_{v})\cdot \frac{L^S(1-s,\Pi^{\vee} \times \Sigma^{\vee})}{L^S(s,\Pi \times \Sigma)}.
\end{align*}
By canceling the identities at places outside $v_0$, we have
\[
\gamma(s,\Xi_{0,v_0} \times \Sigma_{v_0},\Psi_{v_0}) = \gamma(s,\Pi_{v_0}\times \Sigma_{v_0},\Psi_{v_0}).
\]
Since $\Pi_{v_0}=\wt{\pi}'$, $\Sigma_{v_0}=\sigma$, and $\Xi_{0,v_0}=\pi$, we get the desired identity.

Now, let's consider the case when $\pi$ is the Steinberg representation. We choose another finite place $v_1 \ne v_0$ of $K$ and a supercuspidal representation $\pi_1$ of $\H_n^+(K_{v_1})$. Using similar constructions as in \cite[Theorem 4.1]{PS} and \cite[Theorem 1.1]{GL}, we can construct a $\mf{U}'(\A)$-generic cuspidal representation $\Pi'$ of $\Mp(\WW)(\AA)$ such that $\Pi'_{v_0} = \wt{\pi}$, $\Pi'_{v_1} = \theta_{\psi,\WW_{v_1},\VV_{v_1}}(\pi_1)$, and it is unramified at other finite places $v \ne v_0, v_1$ of $K$.

Let $\Xi_0$ be an irreducible generic constituent of the global theta lift $\Theta(\Pi')$ of $\Pi'$ to $\SO(\VV)(\AA)$. Then, $\Xi_{0,v_0} \simeq \pi$, $\Xi_{0,v_1} \simeq \pi_1$, and so $\Xi_0$ is cuspidal because $\pi_1$ is supercuspidal.

We have already shown that
\[
\gamma(s,\Xi_{0,v_1} \times \Sigma_{v_1},\Psi_{v_1}) 
=\gamma(s,\Pi_{v_1}'\times \Sigma_{v_1},\Psi_{v_1})
\]
since $\Xi_{0,v_1} \simeq \pi_1$ is supercuspidal.

For other places $v \ne v_0, v_1$, we know that
\[
\gamma(s,\Xi_{0,v} \times \Sigma_{v},\Psi_{v}) 
=\gamma(s,\Pi_{v}'\times \Sigma_{v},\Psi_{v})
\]from the previous argument.

Therefore, by canceling the above identities at all places other than $v = v_0$ in the two global functional equations, we obtain the desired identity.
\end{proof}
From Theorem~\ref{Theta1}, we can prove Proposition~\ref{gamma} (vii) for $\G =\wt{\J_n}$.

\begin{cor}\label{chol}Let $\wt{\pi}$ and $\sigma$ be irreducible generic tempered representations of $\wt{\J_n}(F)$ and $\GL_k(F)$, respectively. Then $L_{\psi}(s,\wt{\pi} \times \sigma)$ is holomorphic for $\Re(s)>0$.
\end{cor}
\begin{proof}Write $\pi = \Theta_{\psi}(\wt{\pi})$. By Theorem~\ref{Theta1}, $\pi$ is tempered and generic. Hence, by Theorem~\ref{Theta},
\[
\gamma(s, \wt{\pi} \times \sigma, \psi) = \gamma(s, \pi \times \sigma, \psi).
\]
From the definition of the local $L$-function (\ref{L-ftn}), we have
\[
L_{\psi}(s, \wt{\pi} \times \sigma) = L_{\psi}(s, \pi \times \sigma).
\]
By Proposition~\ref{gamma} (vii) for $\G = \H_n^{+}$, $L_{\psi}(s, \pi \times \sigma)$ is holomorphic for $\Re(s) > 0$, and therefore, so is $L_{\psi}(s, \wt{\pi} \times \sigma)$.
\end{proof}

\newpage
\begin{section}{Computation of the twisted Jacquet module of the Weil representation}
\noindent In this section, we establish Lemma~\ref{tjw}, which plays a crucial role in proving Theorem~\ref{Theta1}.

 Let $V=V_n^{+}$ and $W=W_n$, and consider a symplectic form $(\cdot,\cdot)$ defined on $V \otimes W$ as follows:
\[(v_1 \otimes w_1, v_2 \otimes w_2)=( v_1,v_2 )_{V}\cdot \la w_1,w_2 \ra_{W}.\]

There is a natural embedding of $\H_n^{+} \times \wt{\J_n}$ into $\wt{\Sp}(V \otimes W)$. By restricting the Weil representation $\omega_{\psi,V,W}$ of $\wt{\Sp}(V\otimes W)(F)$ to the image of $\H_n^{+}(F) \times \wt{\J_n}(F)$ inside $\wt{\Sp}(V \otimes W)(F)$, we regard $\omega_{\psi,V,W}$ as a representation of $\H_n^{+}(F) \times \wt{\J_n}(F)$.

Recall that $V=Y_n \oplus e \oplus Y_n^*$ and $W=X_n\oplus X_n^*$. There are two kinds of polarization for $V \otimes W$. One is $(Y_n^* \otimes W) \oplus (e \otimes X_n)$, and the other is $V \otimes X_n^*$. Depending on the chosen polarization, the description of the associated Weil representation will differ. For our purpose, we use the polarization $(Y_n^* \otimes W) \oplus (e \otimes X_n)$ of $V \otimes W$. In this case, there is an action of $\H_n^{+}(F) \times \wt{\J_n}(F)$ (resp. $\wt{\J_n}(F)$) on the Schwartz-Bruhat function space $\big(S(Y_n^* \otimes W) \otimes S(e \otimes X_n)\big)(F)$ (resp. $S(X_n)(F)$) via the mixed Schrödinger model of the Weil representation $\omega_{\psi,V,W}$ (resp. $\omega_{\psi,W}$).

Let $\P_n=\M_n \V_n$ be a parabolic subgroup of $\H_n^{+}$ that stabilizes $Y_n$, where $\M_n$ is the Levi subgroup and $\V_n$ is the maximal unipotent subgroup of $\M_n$. Write $\N_n= \{\alpha \in \Hom(Y_n^*,Y_n)\ \vert \ \alpha^*=-\alpha\},$
where $\alpha^*$ is the element in $\Hom(Y_n^*,Y_n)$ satisfying 
\[( ay_1,y_2)_V= (y_1,a^*y_2)_V, \quad \text{ for all } y_1,y_2 \in Y_n^*.\]
Then
\[\M_n\simeq \GL(Y_n) \text{ and } \V_n\simeq \Hom(e,Y_n) \ltimes \N_n.\]   
Let $\Z_n$ be the maximal unipotent subgroup of $\M_n$. The action of $\Z_n(F)\cdot \Hom(e,Y_n) \cdot \N_n(F)\times \wt{\J_n}(F)$ on $S(Y_n^* \otimes W)(F) \otimes S(e \otimes X_n)(F)$ via the mixed Schrödinger model can be described as follows (see \cite[\S{II}.7, Chapter 2]{MVW}). For $\phi_1\otimes \phi_2 \in S(Y_n^* \otimes W)(F) \otimes S(e \otimes X_n)(F)$ and $(w,x) \in (Y_n^* \otimes W)(F) \times (e \otimes X_n)(F)$,
\begin{itemize}
 \item $\omega_{\psi,V,W}(1,h)(\phi_1\otimes \phi_2)(w,x)=\phi_1(w\bar{h})\cdot (\omega_{\psi,W}(h)\phi_2)(x) \text{ for } h \in \wt{\J_n}(F),$
 \item $\omega_{\psi,V,W}\big(z,1\big)(\phi_1\otimes \phi_2)(w,x)=\phi_1\big(z^* w \big)\cdot \phi_2(x)  \text{ for } z \in \Z_n(F) \ss \GL(Y_n)(F),$
 \item $\omega_{\psi,V,W}(t,1)(\phi_1\otimes \phi_2)(w,x)=\psi\big((t^*w,x)\big)\cdot(\phi_1\otimes \phi_2)(w,x)  \text{ for } t\in \Hom(e,Y_n)(F),$
 \item $\omega_{\psi,V,W}(s,1)\cdot(\phi_1\otimes \phi_2)(w,x)= \psi(\frac{1}{2}\la sw,w \ra_{W})\cdot (\phi_1\otimes \phi_2)(w,x)  \text{ for } s \in \N_n(F) \ss \Hom(Y_n^*,Y_n)(F),$
 \end{itemize}
where $\bar{h} \text{ is the projection of } h \text{ to } \J_n(F)$ and $z^*\in \GL(Y_n^*)(F),\ t^* \in \Hom(Y_n^*,e)(F)$ are adjoint maps of $z$ and $t$, respectively.

For convenience, we identify $Y_n^* \otimes W$ with $W^n$ using the basis $\{e_{n}^*,\cdots,e_1^*\}$ of $Y_n^*$ and $e \otimes X_n$ with $X_n$. \\ Let $\M_{n\times n}$ be the $(n \times n)$ matrix group. Using the basis $\{e_1,\cdots,e_n\}$ of $Y_n$, $\{e_n^*,\cdots,e_1^*\}$ of $Y_n^*$ and $\{f_1,\cdots,f_n\}$ of $X_n$, we can identify $\Z_n(F)$, $\Hom(e,Y_n)$ and $\N_n(F)$ as a subgroup of $\GL_n(F)$, $F^{n}$ and $\M_{n\times n}(F)$, respectively.

\noindent Let $\varpi$ be  the $(n \times n)$ anti-diagonal matrix whose non-zero elements are all one. With the above identifications, we can describe the action of $\Z_n(F)\cdot \Hom(e,X_n) \cdot \N_n(F)\times \wt{\J_n}(F)$ on $\big(S(W^n)\otimes S(X_n)\big)(F)$ as follows. \par
For $\phi_1\otimes \phi_2 \in \big(S(W^n)\otimes S(X_n)\big)(F)$ and $(w_1,\cdots,w_n;x) \in (W^n \oplus X_n)(F)$,
\begin{flalign}
 \label{a1}&\omega_{\psi,V,W}(1,h)(\phi_1\otimes \phi_2)(w_1,\cdots,w_n;x)=\phi_1(w_1\bar{h},\cdots,w_n\bar{h})\cdot (\omega_{\psi,W}(h)\phi_2)(x) \text{ for } h \in \wt{\J_n}(F),\\
 \label{a2} &\omega_{\psi,V,W}\big(z,1\big)(\phi_1\otimes \phi_2)(w_1,\cdots,w_n;x)=\phi_1\big((w_1,\cdots,w_n)\cdot \varpi z \varpi \big)\cdot \phi_2(x)  \text{ for } z \in \Z_n(F),\\
 \label{a3}&\omega_{\psi,V,W}(t,1)(\phi_1\otimes \phi_2)(w_1,\cdots,w_n;x)=\psi\big(\sum_{i=1}^n t_i\cdot\la w_{n+1-i},x\ra_W\big)(\phi_1\otimes \phi_2)(w_1,\cdots,w_n;x)  \text{ for } t\in F^n,\\
 \label{a4}&\omega_{\psi,V,W}(s,1)(\phi_1\otimes \phi_2)(w_1,\cdots,w_n;x)= \psi(\frac{1}{2}\tr\big(Gr(\mathbf{w})\cdot \varpi \cdot s\big))(\phi_1\otimes \phi_2)(w_1,\cdots,w_n;x)  \text{ for } s \in \N_n(F),
 \end{flalign}
where $\mathbf{w}=(w_1,v_2,\ldots,w_n), \ Gr(\mathbf{w})=\big(\la w_i,w_j \ra_{W}\big).$

Inspired by the proof of \cite[Proposition 2.1, Corollary 2.1]{JS03}, we now prove Lemma~\ref{tjw}. It is worth mentioning that \cite[Proposition 9.2]{GS} provides partial coverage of this for $\mathrm{char}(F)=0$ without presenting a proof.

\begin{proof}[Proof of Lemma~\ref{tjw}]
We choose the polarization $W^n \oplus X_n$ of $V \otimes W$ and utilize the action of the Weil representation with this polarization. Let us define
\[\overline{W_0}=\left\{\mathbf{w}=(w_1,\cdots,w_n) \in W^n  \ \vert \ Gr(\mathbf{w})=\begin{pmatrix} 0 & \cdots & 0 \\ \vdots  & \vdots & \vdots \\ 0 & \cdots & 0\end{pmatrix}\right\},
\]
where $ Gr(\mathbf{w})=\big(\la w_i,w_j\ra_{W}\big)$. We first claim that 
\[(\omega_{\psi,V_n,W_n})_{\U,\mu^{-1}}\simeq S(\overline{W_0}\oplus X_n).\]

Note that $\overline{W_0}$ is a closed subset of $W^n$. Therefore, by \cite{BZ}, we have the exact sequence
\[\xymatrix{0 \ar[r] & \mc{S}(W^n \oplus X_n  \bs \overline{W_0}\oplus X_n)  \ar[r]^-{\overline{\text{i}}} & \mc{S}(W^n \oplus X_n )  \ar[r]^-{\overline{\text{res}}} & \mc{S}(\overline{W_0}\oplus X_n) \ar[r] & 0},\]
where $\overline{\text{i}}$ is induced from the open inclusion map $i:W^n \oplus X_n  \bs \overline{W_0} \oplus X_n \to W^n \oplus X_n$ and $\overline{\text{res}}:\mc{S}(W^n \oplus X_n) \to \mc{S}(\overline{W_0}\oplus X_n)$ is the restriction map. Let $J_{\U,\mu^{-1}}$ be the twisted Jacquet functor with respect to $\U$ and $\mu^{-1}$. Since the functor $J_{\U,\mu^{-1}}$ is exact, we have the exact sequence
\[\xymatrix{0 \ar[r] & J_{\U,\mu^{-1}}\big(\mc{S}(W^n \oplus X_n  \bs \overline{W_0}\oplus X_n)\big)  \ar[r] & J_{\U,\mu^{-1}}\big(\mc{S}(W^n \oplus X_n)\big)  \ar[r] & J_{\U,\mu^{-1}}\big(\mc{S}(\overline{W_0}\oplus X_n)\big) \ar[r] & 0}.\]
By the definition of $\overline{W_{0}}$ and $(\ref{a4})$, $J_{\U,\mu^{-1}}\big(\mc{S}(W^n \oplus X_n  \bs \overline{W_0}\oplus X_n)\big)=0$ and so $J_{\U,\mu^{-1}}\big(\mc{S}(W^n \oplus X_n)\big)=\mc{S}(\overline{W_0}\oplus X_n)$. 

Therefore, our first claim is proved. 
Note that there is an action of $\Z_n(F)\times \wt{\J_n}(F)$ on $\overline{W_0}$ inherited from $W^n$ as follows;
\[(w_1,\cdots,w_n)\cdot (z,h)=((w_1\bar{h},\cdots,w_n\bar{h})\varpi z \varpi).
\]
Using the $\Z_n(F)$-action on $\overline{W_0}$, we can choose the representatives of the orbits of $\overline{W_0}$ as the form
\[(0,\cdots,0,w_j,0,\cdots,0,w_{j-1},0,\cdots,0,w_1,0,\cdots,0)\in \overline{W_0} \subset  W^n,
\]
for some $0 \leq j \leq n$, where $\{x_1,\cdots,x_j\}$ is a linearly independent set. 
By the Witt extension theorem, we can choose more restrictive representatives of the $(\Z_n(F)\times \wt{\J_n}(F))$-orbits of $\overline{W_0}$ as 
\[(0,\cdots,0,f_j^*,0,\cdots,0,f_{j-1}^{*},0,\cdots,0,f_1^*,0,\cdots,0)\in \overline{W_0}   \subset W^n.
\]
(Here, $0\le j \le n-1$ and we set $f_0^*=0$.)
Therefore, there are finite $(\Z_n(F)\times \wt{\J_n}(F))$-orbits in $\overline{W_0}$ of the forms
\[[(0,\cdots,0,f_j^*,0,\cdots,0,f_{j-1}^{*},0,\cdots,0,f_1^*,0,\cdots,0)] \subset W^n
\] and index them by $\{\overline{W_0}(i)\}_{1\le i \le N}$ so that $\dim(\overline{W_0}(i)) \le \dim(\overline{W_0}(j))$ for $i \le j$.

\par Note that for each $j\ge 1$, $\overline{W_0}(j)$ is a closed subset of $\bigcup_{i \ge j}\overline{W_0}(i)$ and therefore, we have the exact sequence
\begin{align} \label{ex}\xymatrix{0 \ar[r] & \mc{S}\big((\bigcup_{i \ge j+1}\overline{W_0}(i)) \big)  \ar[r] & \mc{S}\big((\bigcup_{i \ge j}\overline{W_0}(i)) \big)  \ar[r] & \mc{S}(\overline{W_0}(j)) \ar[r] & 0}.\end{align}
We claim that the Schwartz space on each orbits $\overline{W_0} $  whose representative is of the form \[(0,\cdots,0,f_k^*,0,\cdots,0,f_{k-1}^{*},0,\cdots,0,f_1^*,0,\cdots,0), \quad \text{for $k<n$}\] is zero.\par
Let $\overline{W_0}(j)$ be an orbit in $\overline{W_0}$ whose representative is $\bar{\mathbf{w}}=(0,\cdots,0,f_k^*,0,\cdots,0,f_{k-1}^{*},0,\cdots,0,f_1^*,0,\cdots,0)$ for some $k<n$. Suppose that $\mc{S}(\overline{W_0}(j))$ is non-zero and put $R_{\bar{\mathbf{w}}}$ the stabilizer of $\bar{\mathbf{w}}$ in $\Z_n(F)\times \wt{\J_n}(F)$.
Consider a map 
\[\Phi_{\bar{\textbf{w}}} :\mc{S}(\overline{W_{0}}(j)) \to \ind_{R_{\bar{\mathbf{v}}}}^{\Z_n \times \wt{\J_n}}\mathbb{I}, \ \varphi \mapsto \Phi_{\bar{\textbf{w}}}(\varphi),
\]
where $\Phi_{\bar{\textbf{w}}}$ is defined by
\[\Phi_{\bar{\textbf{w}}}(\varphi)(g,h)\coloneqq (\omega_{\psi,V,W}(g,h)\varphi)(\bar{\textbf{w}}).
\]
It can be readily verified that $\Phi_{\bar{\textbf{w}}}$ is a $(\Z_n(F) \times \wt{\J_n}(F))$-isomorphism. Consequently, we have $\mathcal{S}(\overline{W_0}(j)) \simeq \text{ind}_{R_{\bar{\mathbf{w}}}}^{\Z_n(F)\times \wt{\J_n}(F)}\mathbb{I}$ as a $(\Z_n(F) \times \wt{\J_n}(F))$-module. Since $k<n$, there is a simple root subgroup $J$ of $\Z_n(F)$ such that $J \times 1$ is a subgroup of $R_{\bar{\mathbf{w}}}$. However, $\mu$ is non-trivial on $J$ and it leads to a contradiction.\par
Therefore, by applying the exact sequence (\ref{ex}) repeatedly, we have
\[\mc{S}(W^n \oplus X_n)\simeq  \mc{S}(\overline{W_0}) \otimes  \mc{S}(X_n) \simeq \mc{S}(\overline{W_0}(N)) \otimes  \mc{S}(X_n),
\]
where $\overline{W_0}(N)$ is a $(\Z_n(F)\times \wt{\J_n}(F))$-orbit in $\overline{W_0}$ whose representative is $\bar{\mathbf{w}}'=(f_n^*,\cdots,f_{1}^*)$. 

By (\ref{a3}), $J_{\U,\mu^{-1}}\big(\mc{S}(\overline{W_0}(N)) \otimes  \mc{S}(X_n\bs \{f_n\})\big)=0$. Therefore, using a similar argument as before, we have 
\[J_{\U,\mu^{-1}}\big(\mc{S}(W^n \oplus X_n)\big)=J_{\U,\mu^{-1}}\big(\mc{S}(\overline{W_0}(N)) \otimes \mc{S}(\{f_n\})\big)=\mc{S}(\overline{W_0}(N)) \otimes \mc{S}(\{f_n\}).
\]
Let $R$ be the stabilizer of $\bar{\mathbf{w}}'$ in $\Z_n(F)\times \wt{\J_n}(F)$. Then,
\[R=\Biggr\{ \Biggl(  z, \begin{pmatrix}z & x \\ 
 & z^*\end{pmatrix}\Biggr) \in \Z_n(F)\times \wt{\J_n}(F)  \Biggl\},
\]
where we described the elements of $\wt{\J_n}$ using the basis $\{f_1,\cdots,f_{n},f_n^*,\cdots,f_{1}^*\}$.\par


Note that for $\phi_2 \in \mc{S}(X_n)$, $(\omega_{\psi,W}\Biggr( \begin{pmatrix}1 & x \\ 
 & 1\end{pmatrix}\Biggr)\phi_2)(f_n)=\psi(\frac{x_{n,1}}{2})\cdot \phi_2(f_n)$. Therefore, by $(\ref{a1}), (\ref{a2})$, we have \[\mc{S}(\overline{W_0}(N)) \otimes \mc{S}(\{f_n\})\simeq \ind_{R}^{\Z_n(F)\times \wt{\J_n}(F)}(\mathbb{I}\otimes \psi_n),\] 
where $\psi_n:R \to \CC^{\times}$ is defined by
\[\psi_n\Biggl(   \Biggl(  z, \begin{pmatrix}z & x \\ 
 & z^*\end{pmatrix}\Biggr) \Biggr)=\psi(\frac{x_{n,1}}{2}).
\]
Therefore, for $\phi \in \ind_{R}^{\Z_n(F)\times \wt{\J_n}(F)}(\mathbb{I}\otimes \psi_n)$ and $u'=\begin{pmatrix}z & x \\ 
 & z^*\end{pmatrix}\in \U'(F)$ we have
\[\phi\big(( 1,u'\cdot h)\big)=\psi(z_{1,2}+\cdots+z_{n-1,n}+\frac{x_{n,1}}{2})\phi((1,h))=\mu_1'(u')\phi\big((1,h)\big) \quad \text{for all } h \in \wt{\J_n}(F),\]
where $1$ is the identity element in $\Z_n(F)$. This proves our claim.
\end{proof}
\end{section}
\newpage

\section{The proof}

In this section, we will prove Theorem~\ref{main1}. To do so, we will make use of the Local Converse Theorem (LCT) for $\H_n^{+}$. Before proceeding, we will recall a result by Jo that improves upon the earlier work of Jiang and Sourdry (\cite{JS03}).

\begin{thm}[{\cite[Theorem~{2.11}]{Jo}}] \label{SO} Let $\pi$ and $\pi'$ be irreducible admissible generic representations of $\H_n^{+}(F)$ such that
\[\gamma(s,\pi \times \sigma,\psi)=\gamma(s,\pi' \times \sigma,\psi)
\] holds for any irreducible generic supercuspidal representation $\sigma$ of $\GL_i(F)$ with $1\le i \le n$. Then
\[\pi \simeq \pi'.
\]
\end{thm}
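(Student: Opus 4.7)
The natural strategy is a global-to-local argument that transfers the converse problem from $\H_n = \SO_{2n+1}$ to $\GL_{2n}$ via the generic weak functorial lift, where the optimal local converse theorem of Chai (\cite{Cha19}) and Jacquet-Liu (\cite{JL18}) can then be applied. The bound $1 \le i \le n$ in the hypothesis matches exactly the optimal bound for $\GL_{2n}$, which is a strong hint that this is the intended approach.

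First I would reduce to the case where $\pi$ and $\pi'$ are both irreducible generic supercuspidal. Any irreducible generic representation of $\H_n(F)$ is realized as the unique generic constituent of a standard module induced from a supercuspidal datum $\tau_1 \otimes \cdots \otimes \tau_r \otimes \pi_0$, with $\pi_0 \in \Irr_{\supc}(\H_k)$ for some $k \le n$. By the multiplicativity of $\gamma$-factors (Proposition~\ref{gamma}(ii)) together with the optimal local converse theorem for $\GL$ applied to the $\tau_i$, the hypothesis for $\pi$ and $\pi'$ propagates to their supercuspidal supports, so after matching the $\GL$-parts one may assume $\pi, \pi' \in \Irr_{\supc}(\H_n)$.

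Next, I would globalize and transfer. Using a simple trace formula / Poincar\'e series argument (\cite[Theorem 4.1]{PS} in characteristic zero and \cite[Theorem 1.1]{GL} in positive characteristic), construct globally generic cuspidal automorphic representations $\Pi, \Pi'$ of $\H_n(\A)$ over a suitable global field $K$ with $K_{v_0} = F$, such that $\Pi_{v_0} \simeq \pi$, $\Pi'_{v_0} \simeq \pi'$, and $\Pi_v \simeq \Pi'_v$ is unramified at every other finite place (with controlled archimedean components). Applying the descent and functorial lift of Cogdell-Kim-Piatetski-Shapiro-Shahidi together with Ginzburg-Rallis-Soudry produces isobaric automorphic representations $T(\Pi), T(\Pi')$ of $\GL_{2n}(\A)$ whose local components at $v_0$ are generic, and whose local $\gamma$-factors twisted by generic $\sigma$ match those of $\Pi_{v_0}, \Pi'_{v_0}$. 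Twisting by a cuspidal generic globalization $\Sigma$ of an irreducible supercuspidal $\sigma$ of $\GL_i(F)$ with $\Sigma_{v_0} \simeq \sigma$ and unramified elsewhere, and canceling the global functional equations of $L^S(s,T(\Pi)\times\Sigma)$ and $L^S(s,T(\Pi')\times\Sigma)$ at all places outside $v_0$, yields
\[
\gamma(s, T(\Pi)_{v_0} \times \sigma, \psi) = \gamma(s, T(\Pi')_{v_0} \times \sigma, \psi)
\]
for every irreducible supercuspidal $\sigma$ of $\GL_i(F)$ with $1 \le i \le n$. The Chai / Jacquet-Liu converse theorem then gives $T(\Pi)_{v_0} \simeq T(\Pi')_{v_0}$.

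The final step is to descend this equality back to $\H_n(F)$: one must show that the generic supercuspidals $\pi$ and $\pi'$ are uniquely recovered from $T(\Pi)_{v_0}$ via local descent, and hence isomorphic. In characteristic zero this follows cleanly from the local Langlands correspondence for $\SO_{2n+1}$ (\cite{Ar13}) together with the uniqueness of generic members of $L$-packets. In positive characteristic, Arthur's endoscopic classification is unavailable, and one must instead argue directly via the local Ginzburg-Rallis-Soudry descent functor and establish its injectivity on generic supercuspidal inputs. This injectivity of local descent is the main obstacle; a secondary technical point is arranging the auxiliary globalizations so that cancellation in the global functional equation at places outside $v_0$ is automatic, which is the standard role of the cited globalization theorems.
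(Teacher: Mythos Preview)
The paper does not prove Theorem~\ref{SO} at all: it is quoted as an external input from Jo~\cite{Jo}, and used as a black box in the proof of Theorem~\ref{sc}. So there is no ``paper's own proof'' to compare against here.

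That said, it is worth noting that your proposed strategy is \emph{not} the one Jo uses. As the introduction of the paper explains, Jo's argument follows Zhang~\cite{Q18} and is purely local, based on the theory of partial Bessel functions of Cogdell--Shahidi--Tsai~\cite{CST17}; in particular it does not pass through any global functorial lift or descent. Your global-to-local strategy is instead the original Jiang--Soudry route~\cite{JS03}, upgraded by replacing Henniart's weak $\GL_n$ converse theorem with the optimal one of Chai / Jacquet--Liu. In characteristic zero this works, and indeed you can close the last step with Arthur~\cite{Ar13} plus generic uniqueness in $L$-packets, as you say.

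The genuine gap is the one you yourself flag: in positive characteristic, Arthur's classification is unavailable, and the injectivity of the local descent on generic supercuspidals is not something you can simply assert---it is essentially the content of the theorem you are trying to prove. This is precisely why Jo's purely local Bessel-function argument is the relevant reference: it bypasses the global lift entirely and therefore applies uniformly in all characteristics $\neq 2$. Your sketch, as written, does not yield a proof in positive characteristic without an independent argument for that injectivity, and supplying one would amount to reproving the theorem by other means.
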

\begin{rem}In \cite[Theorem~{2.11}]{Jo}, an assumption is made regarding the equality of central characters between $\pi$ and $\pi'$. Nonetheless, considering that the center of $\H_n^+(F)$ is the trivial group, the central characters of $\pi$ and $\pi'$ are inherently trivial. Therefore, we can safely omit this assumption.

\end{rem}

The proof of Theorem~\ref{main1} consists of two steps, with the first step focusing on the tempered cases.

\begin{thm}\label{sc}
Let $\wt{\pi}$ and $\wt{\pi}'$ be irreducible admissible $\mu_1'$-generic tempered representations of $\wt{\J_n}(F)$ such that for any irreducible supercuspidal representation $\sigma$ of $\GL_i(F)$ with $1\le i \le n$, the equality
\[\gamma(s,\wt{\pi} \times \sigma,\psi)=\gamma(s,\wt{\pi}' \times \sigma,\psi)
\] holds. Then we have
\[\wt{\pi} \simeq \wt{\pi}'.
\]
\end{thm}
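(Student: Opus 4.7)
The plan is to transfer the problem from $\wt{\J_n}$ to $\H_n^+ = \SO_{2n+1}$ via the local theta correspondence, thereby reducing Theorem~\ref{sc} to Jo's local converse theorem for odd special orthogonal groups (Theorem~\ref{SO}). All the heavy lifting has been done in the preceding two sections; what remains is essentially an assembly of Theorem~\ref{Theta1}, Theorem~\ref{Theta}, Theorem~\ref{SO}, and the injectivity of the theta correspondence.

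First, I set $\pi \coloneqq \Theta_\psi(\wt{\pi})$ and $\pi' \coloneqq \Theta_\psi(\wt{\pi}')$. Since $\wt{\pi}$ and $\wt{\pi}'$ are both $\mu_1'$-generic and tempered, Theorem~\ref{Theta1} guarantees that $\pi$ and $\pi'$ are both tempered generic representations of the split odd orthogonal group $\H_n^+(F)$; in particular, the two theta lifts land in the same pure inner form, so they are directly comparable.

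Next, I invoke the preservation of $\gamma$-factors under $\Theta_\psi$. Since both $\pi$ and $\pi'$ are generic by the previous step, Theorem~\ref{Theta} applies and yields
\[
\gamma(s,\pi \times \sigma,\psi)=\gamma(s,\wt{\pi} \times \sigma,\psi), \qquad \gamma(s,\pi' \times \sigma,\psi)=\gamma(s,\wt{\pi}' \times \sigma,\psi)
\]
for every irreducible generic representation $\sigma$ of $\GL_i(F)$. Combining these two identities with the hypothesis of Theorem~\ref{sc}, and noting that every irreducible supercuspidal representation of $\GL_i(F)$ is automatically generic, I obtain
\[
\gamma(s,\pi \times \sigma,\psi)=\gamma(s,\pi' \times \sigma,\psi)
\]
for all irreducible generic supercuspidal representations $\sigma$ of $\GL_i(F)$ with $1 \le i \le n$.

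Now Jo's local converse theorem for $\H_n$ (Theorem~\ref{SO}) applies verbatim and gives $\pi \simeq \pi'$. Since $\Theta_\psi$ is a bijection between $\Irr(\wt{\J_n})$ and $\Irr(\H_n^+)\sqcup \Irr(\H_n^-)$ (Howe duality, as reviewed at the start of Section~3), with inverse $\Theta_\psi^{-1}$ such that $\Theta_\psi^{-1}(\pi) = \wt{\pi}$ and $\Theta_\psi^{-1}(\pi')=\wt{\pi}'$, the isomorphism $\pi \simeq \pi'$ lifts immediately to $\wt{\pi} \simeq \wt{\pi}'$, completing the proof. There is no serious obstacle at this stage: the genuine difficulties were the preservation of genericity (Theorem~\ref{Theta1}, which rests on the Jacquet module computation of Lemma~\ref{tjw}) and of $\gamma$-factors (Theorem~\ref{Theta}, which required a careful global-to-local argument avoiding undefined $\gamma$-factors for non-generic metaplectic representations); once those are in hand, the tempered local converse theorem follows by formal manipulation.
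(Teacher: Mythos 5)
Your argument is correct and is essentially identical to the paper's own proof: both transfer $\wt{\pi}$ and $\wt{\pi}'$ to $\H_n^+(F)$ via $\Theta_\psi$, use Theorem~\ref{Theta1} for preservation of temperedness and genericity, Theorem~\ref{Theta} for preservation of $\gamma$-factors, Theorem~\ref{SO} to conclude $\Theta_\psi(\wt{\pi})\simeq\Theta_\psi(\wt{\pi}')$, and the bijectivity of $\Theta_\psi$ to descend the isomorphism. No gaps.
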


\begin{proof}
We begin by considering the bijection $\Theta_{\psi}$ and applying it to $\wt{\pi}$ and $\wt{\pi}'$. Since both $\wt{\pi}$ and $\wt{\pi}'$ are $\mu_1'$-generic and tempered, according to Theorem~\ref{Theta1}, the representations $\Theta_{\psi}(\wt{\pi})$ and $\Theta_{\psi}(\wt{\pi}')$ are irreducible generic tempered representations of $\H_n^{+}(F)$. Furthermore, by Theorem~\ref{Theta}, we have
\[\gamma(s,\Theta_{\psi}(\wt{\pi}) \times \sigma,\psi)=\gamma(s,\wt{\pi} \times \sigma,\psi)=\gamma(s,\wt{\pi}' \times \sigma,\psi)=\gamma(s,\Theta_{\psi}(\wt{\pi}')\times \sigma,\psi).
\]
for all irreducible supercuspidal representations $\sigma$ of $\GL_i(F)$.

Applying Theorem~\ref{SO}, we conclude that $\Theta_{\psi}(\wt{\pi}) \simeq \Theta_{\psi}(\wt{\pi}')$. Since $\Theta_{\psi}$ is a bijective correspondence, this implies that $\wt{\pi} \simeq \wt{\pi}'$.
\end{proof}

The following lemma is an extended version of \cite[Proposition 3.2]{JS03} and \cite[Proposition~{2.8} $(\rm{iii})$]{Jo} from supercuspidal to tempered representations.

\begin{lem} \label{temp}Let $\pi$ be an irreducible generic tempered representation of $\H_n^+(F)$ and $\sigma$ an irreducible unitary supercuspidal representation of $\GL_k(F)$. If $\gamma(s,\pi \times \sigma,\psi)$ has a real pole at $s=s_0$, then the pole must be a simple pole at $s_0=1$ and $\sigma \simeq \sigma^{\vee}$. 
\end{lem}

\begin{proof}

In the proof of \cite[Proposition~{2.8} $(\rm{iii})$]{Jo}, the requirement that $\pi$ is supercuspidal there was imposed in order to apply the `Casselman--Shahidi' Lemma (\cite[Lemma 2.6]{Jo}). However, it has been established in \cite[Proposition 12.3]{Kim} that the `Casselman--Shahidi Lemma' also holds for tempered representations. Therefore, except for this specific assumption, all other arguments presented in \cite[Proposition 2.8 $(\rm{iii})$]{Jo} can be equally applied to tempered representations. This completes the proof.  
\end{proof}

\begin{prop} 
\label{Pole}
Let $\sigma, \tau_i$ be  irreducible unitary supercuspidal representations of $\GL_k(F), \GL_{n_i}(F)$ for $1 \le i \le t$. Let $s_i$ be real numbers and $\wt{\pi_0}$ an irreducible $\mu_1'$-generic  tempered representation of $\wt{\J_n}(F)$. Then we have the following;
\begin{enumerate}[label=$(\mathrm{\roman*})$]
\item If the product $\prod_{i=1}^t \gamma(s+s_i,\tau_i \times \sigma,\psi)$ has a real pole $($respectively, a real zero$)$ at $s=s_0$, then $\sigma \cong \tau_i^{\vee}$
and $s_0=1-s_i$ $($respectively, $s_0=-s_i)$ for some $1 \leq i \leq t$.
\item If the product $\prod_{i=1}^t \gamma(s-s_i,\tau_i^{\vee} \times \sigma,\psi)$ has a real pole $($respectively, a real zero$)$ at $s=s_0$, then $\sigma \cong \tau_i$
and $s_0=1+s_i$ $($respectively, $s_0=s_i)$ for some $1 \leq i \leq t$.
\item\label{PoleDecomposition-3} The factor $\gamma(s,\cl{\pi}_0 \times \sigma,\psi)$ has no zero for ${\Re}(s) > 0$. 
 If $\gamma(s,\wt{\pi_0} \times \sigma,\psi)$ has a real pole at $s=s_0$, then the pole must be a simple pole at $s_0=1$ and $\sigma \cong \sigma^{\vee}$. 
\end{enumerate}
\end{prop}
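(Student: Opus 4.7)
The plan is to reduce each of (i)--(iii) to established analytic properties of the gamma factors involved: to the pole-and-zero structure of the Jacquet--Piatetski-Shapiro--Shalika $\GL\times\GL$ factor in (i) and (ii), and to Lemma~\ref{temp} (via theta transfer) in (iii). Throughout, no subtle multiplicity or cancellation analysis is needed, since each assertion asks only for the existence of at least one responsible index.

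For parts (i) and (ii), the key input is the analytic description of the Rankin--Selberg gamma factor
\[
\gamma(s,\tau \times \sigma,\psi)=\e(s,\tau \times \sigma,\psi)\cdot \frac{L(1-s,\tau^{\vee}\times \sigma^{\vee})}{L(s,\tau \times \sigma)}
\]
for irreducible unitary supercuspidal representations $\tau$ and $\sigma$ of general linear groups. Each $L$-factor is identically $1$ unless $\sigma \simeq \tau^{\vee}$, in which case it is a finite product of terms $(1-\alpha q^{-s})^{-1}$ with $|\alpha|=1$, whose only real singularity lies at $s=0$ (resp.\ at $s=1$ for the numerator after the shift $s \mapsto 1-s$). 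Combined with the nonvanishing of $\e(s,\tau\times\sigma,\psi)$, this shows that $\gamma(s,\tau\times\sigma,\psi)$ is holomorphic and nowhere vanishing on the real line away from $s=0,1$, and that its only possible real pole (resp.\ real zero) is at $s=1$ (resp.\ at $s=0$) and forces $\sigma\simeq \tau^{\vee}$. Applied to each factor of $\prod_{i=1}^{t}\gamma(s+z_i,\tau_i\times\sigma,\psi)$, the $i$-th factor can have a real pole only at $s=1-z_i$ and a real zero only at $s=-z_i$, both under the condition $\sigma\simeq \tau_i^{\vee}$. Since orders add, if the product has a real pole (resp.\ real zero) at $s_0$, then at least one factor must contribute that pole (resp.\ zero), yielding the required index $i$ and the equality $s_0=1-z_i$ (resp.\ $s_0=-z_i$). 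Part (ii) is proved verbatim with the factors $\gamma(s-z_i,\tau_i^{\vee}\times\sigma,\psi)$ in place of $\gamma(s+z_i,\tau_i\times\sigma,\psi)$.

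For part (iii), I transport the problem to the split odd orthogonal side via the theta correspondence. Since $\wt{\pi_0}$ is $\mu_1'$-generic and tempered, Theorem~\ref{Theta1} ensures that $\pi_0 := \Theta_{\psi}(\wt{\pi_0})$ is an irreducible generic tempered representation of $\H_{n_0}^{+}(F)$, and Theorem~\ref{Theta} yields
\[
\gamma(s,\wt{\pi_0} \times \sigma,\psi)=\gamma(s,\pi_0 \times \sigma,\psi).
\]
Applying Lemma~\ref{temp} to $\pi_0$ then immediately gives both the non-vanishing on $\Re(s)>0$ and the fact that any real pole must be simple, located at $s_0=1$, with $\sigma\simeq \sigma^{\vee}$. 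The only conceptual content of this step is the transfer of the generic tempered hypothesis and of the gamma factor from the metaplectic side to the orthogonal side, both of which have already been established in Section~3; the remainder is formal.
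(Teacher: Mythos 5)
Your argument is correct and essentially matches the paper's: for (i)--(ii) you spell out the standard pole/zero analysis of supercuspidal $\GL\times\GL$ gamma factors that the paper simply cites from \cite[\S 3.2]{JS03}, and for (iii) you use the identical theta-transfer via Theorems~\ref{Theta1} and~\ref{Theta} followed by Lemma~\ref{temp}. The only cosmetic imprecision is that $L(s,\tau\times\sigma)$ can be nontrivial whenever $\sigma\simeq\tau^{\vee}\otimes|\det|^{it}$ with $t\in\RR\setminus\{0\}$, not only when $\sigma\simeq\tau^{\vee}$; but those extra poles are non-real, so your conclusions about real poles and zeros are unaffected.
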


\begin{proof}$(\rm i)$ and $(\rm{ii})$ follows from the discussion of \cite[Section 3.2]{JS03}. The zeros of $\gamma(s,\wt{\pi_0} \times \sigma,\psi)$ arises from the poles of $L_{\psi_0}(s,\wt{\pi_0} \times \sigma)$. Therefore, the first part of $(\rm{iii})$ follows from Corollary~\ref{chol}.
For the second statement of (iii),
consider $\Theta_{\psi}(\wt{\pi_0})$. By Theorem~\ref{tem0} and Theorem~\ref{Theta}, $\Theta_{\psi}(\wt{\pi_0})$ is tempered and generic. Therefore,
\[\gamma(s,\wt{\pi_0} \times \sigma,\psi)=\gamma(s,\Theta_{\psi}(\wt{\pi_0}) \times \sigma,\psi)
\] and it follows from Lemma~\ref{temp} $(\rm{iii})$.
\end{proof}

The proposition stated below is a consequence of Proposition~\ref{Pole}. Although the proof is similar to that of \cite[Theorem 5.1]{JS03}, due to its importance, we provide the proof.

\begin{prop}
\label{equal}
 Let $\wt{\pi}$ (resp. $\wt{\pi}'$) be an irreducible $\mu_1'$-generic representations of $\wt{\J_n}(F)$, which has tempered support $(\tau_1,\cdots,\tau_r;\wt{\pi_0})$ (resp. $(\tau_1',\cdots,\tau_{r'}';\wt{\pi_0}')$) and exponents $(s_1,\cdots,s_r)$ (resp. $(s_1',\cdots,s_{r'}')$).
 Suppose that
\begin{multline}
\label{GeneralDec}
\left[\prod_{i=1}^r \gamma(s+s_i,\tau_i \times \sigma,\psi)\gamma(s-s_i,\tau_i^{\vee} \times \sigma,\psi) \right]\gamma(s,\wt{\pi_0} \times \sigma,\psi)\\
 =\left[\prod_{i=1}^{r'} \gamma(s+s_i',\tau_i' \times \sigma,\psi)\gamma(s-s_i',(\tau_i')^{\vee} \times \sigma,\psi) \right]\gamma(s,\wt{\pi_0}' \times \sigma,\psi)
\end{multline}
for all irreducible unitary supercuspidal representations $\sigma$ of ${\GL}_t(F)$ with $1 \leq t \leq n$. Then $r=r'$ and there exists a permutation $\mathfrak{p}$
of $\{ 1,2,\dotsm,r\}$ such that
\begin{enumerate}[label=$(\mathrm{\roman*})$]
\item $s_i=s'_{\mathfrak{p}(i)}$ and $\tau_i \cong \tau_{\mathfrak{p}(i)}'$ for all $i=1,2,\dotsm,r$;
\item $\gamma(s,\wt{\pi_0} \times \sigma,\psi)=\gamma(s,\wt{\pi_0}' \times \sigma,\psi)$ for all irreducible unitary supercuspidal representations $\sigma$ of ${\GL}_t(F)$ with $1 \leq t \leq n$. 
\end{enumerate}
\end{prop}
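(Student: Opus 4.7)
The plan is to compare the real poles of both sides of (\ref{GeneralDec}), viewed as meromorphic functions of $s$, using Proposition~\ref{Pole} to locate them precisely, and then to match the pairs $(\tau_i,s_i)$ with $(\tau'_{\rho(i)},s'_{\rho(i)})$ by inductively reading off the rightmost real pole. After reordering, I would assume $s_1\ge \cdots \ge s_r > 0$ and $s'_1\ge \cdots \ge s'_{r'} > 0$.

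The first step is a \emph{peeling} procedure. Fix an irreducible unitary supercuspidal $\sigma$ of $\GL_k(F)$ with $k\le n$. By Proposition~\ref{Pole}~(i) each factor $\gamma(s+s_i,\tau_i\times\sigma,\psi)$ can have a real pole only at $s = 1-s_i \le 1$, and by Proposition~\ref{Pole}~(iii) the tempered factor $\gamma(s,\wt{\pi_0}\times\sigma,\psi)$ can have a real pole only at $s=1$. Hence every real pole of the LHS strictly greater than $1$ must come from a factor $\gamma(s-s_i,\tau_i^\vee\times\sigma,\psi)$ with $\tau_i\cong \sigma$, and is then located at $s=1+s_i$. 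Taking $\sigma = \tau_1$, the rightmost real pole of the LHS lies at $s_0 = 1+s_1$ and its multiplicity equals $\#\{i : \tau_i\cong \tau_1,\ s_i=s_1\}$. Performing the same analysis on the RHS and matching the location and multiplicity of this rightmost pole produces some index $j_1$ with $\tau'_{j_1}\cong \tau_1$ and $s'_{j_1}=s_1$. Setting $\rho(1)=j_1$ and cancelling the matched pair $\gamma(s+s_1,\tau_1\times\sigma,\psi)\gamma(s-s_1,\tau_1^\vee\times\sigma,\psi)$ from both sides yields an identity of the same shape with one fewer factor on each side.

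Iterating this peeling argument, letting $\sigma$ range over all irreducible unitary supercuspidals to separate different cuspidal blocks, would construct the permutation $\rho$ and force $r=r'$, establishing (i). After performing all such cancellations, what remains is precisely $\gamma(s,\wt{\pi_0}\times\sigma,\psi)=\gamma(s,\wt{\pi_0}'\times\sigma,\psi)$ for every unitary supercuspidal $\sigma$ of $\GL_t(F)$ with $1\le t\le n$, which is (ii).

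The one delicate point to verify is that the peeled-off pole at $s=1+s_1$ cannot be cancelled by a zero of some other factor. By Proposition~\ref{Pole}~(i)--(ii) the real zeros of $\prod_i \gamma(s+s_i,\tau_i\times\sigma,\psi)$ occur at $s=-s_i$ and those of $\prod_i \gamma(s-s_i,\tau_i^\vee\times\sigma,\psi)$ at $s=s_i$, and by Proposition~\ref{Pole}~(iii) the tempered factor has no zeros in the region $\Re(s)>0$; hence no zero can occur at $s=1+s_1 > s_1$, and the pole is genuine. This is the only subtle issue, and the remainder of the argument proceeds by the same pattern as \cite[Theorem~5.1]{JS03} and \cite[Proposition~4.14]{Jo}, with Proposition~\ref{Pole} replacing their supercuspidal pole analysis.
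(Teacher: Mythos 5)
Your argument is correct and is precisely the pole--zero matching induction of \cite[Theorem~5.1]{JS03} and \cite[Proposition~4.14]{Jo} that the paper invokes (and omits), with Proposition~\ref{Pole} supplying the location of real poles and zeros and the temperedness of $\wt{\pi_0}$ ruling out cancellation at $s=1+s_1>1$. The peeling step, the use of $\sigma\cong\tau_1$ to isolate the rightmost pole, and the verification that no zero can mask it are exactly the intended proof.
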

\begin{proof}
Since the equality (\ref{GeneralDec}) holds for any irreducible supercuspidal representation $\sigma$ of $\GL_t(F)$, put $\sigma=\tau_1$. Then, we have \begin{multline}
\label{GD1}
\left[\prod_{i=1}^r \gamma(s+s_i,\tau_i \times \tau_1,\psi)\cdot \gamma(s-s_i,\tau_i^{\vee} \times \tau_1,\psi) \right] \cdot\gamma(s,\wt{\pi_{0}} \times \tau_1,\psi)\\
 =\left[\prod_{i=1}^{r'} \gamma(s+s_i',\tau_i' \times \tau_1,\psi)\cdot \gamma(s-s_i',(\tau_i')^{\vee} \times \tau_1,\psi) \right] \cdot\gamma(s,\wt{\pi_{0}}' \times \tau_1,\psi).
\end{multline}
By Proposition~\ref{temp}, $\gamma(s-s_1,\tau_1^{\vee}\times \tau_1
,\psi)$ has a pole at $s=s_1+1$ and the left-hand side (LHS) of the equation (\ref{GD1}) has no zero at $s=s_1+1$. Therefore, it has a pole at $s=s_1+1$. The poles on the right-hand side (RHS) of the equation (\ref{GD1}) can arise from one of the following terms:
\begin{enumerate}
    \item $\prod_{1\le i \le r'}\gamma(s+s_i',\tau_i' \times \tau_1,\psi)$,
    \item $\prod_{1\le i \le r'}\gamma(s-s_i',(\tau_i')^{\vee} \times \tau_1,\psi)$; or 
    \item $\gamma(s,\wt{\pi_{0}}' \times \tau_1,\psi)$. 
\end{enumerate}

If the pole $s=s_1+1$ on the RHS originates from $\prod_{1\le i \le r'}\gamma(s-s_i',(\tau_i')^{\vee} \times \tau_1,\psi)$, then by Proposition~\ref{temp} (ii), we must have $s_1+1=s_i'+1$ and $\tau_1=\tau_i'$ for some $1\le i \le r'$. Consequently, we can cancel the term
\[
\gamma(s+s_1,\tau_1 \times \sigma,\psi)\cdot \gamma(s-s_1,\tau_1^{\vee} \times \sigma,\psi)
=
\gamma(s+s_i',\tau_i' \times \sigma,\psi)\cdot \gamma(s-s_i',(\tau_i')^{\vee} \times \sigma,\psi)
\]
on both sides of the equation (\ref{GeneralDec}). 

For later use in the proof, we call the above argument that cancels gamma factors as Argument A. 

By applying the Argument A iteratively for $s=s_i+1$, there exist some $1\le t \le r$ and a permutation $\mathfrak{p}$ of $\{1,2,\cdots,r'\}$ such that $s=s_j +1$ is a pole of $\prod_{i=1}^{r'-(t-1)}\gamma(s-s_{\mathfrak{p}(i)}',(\tau_{\mathfrak{p}(i)}')^{\vee} \times \tau_t,\psi)$ for $j=1, \cdots, t-1$ and $s=s_t+1$ is not a pole of $\prod_{i=1}^{r'-(t-1)}\gamma(s-s_{\mathfrak{p}(i)}',(\tau_{\mathfrak{p}(i)}')^{\vee} \times \tau_t,\psi)$. Thus, we obtain the following refined equality:
\begin{multline}
\label{GD2}
\left[\prod_{i=t}^r \gamma(s+s_i,\tau_i \times \sigma,\psi)\cdot \gamma(s-s_i,\tau_i^{\vee} \times \sigma,\psi) \right] \cdot\gamma(s,\wt{\pi_{0}} \times \sigma,\psi)\\
 =\left[\prod_{i=1}^{r'-(t-1)} \gamma(s+s_{\mathfrak{p}(i)}',\tau_{\mathfrak{p}(i)}' \times \sigma,\psi)\cdot \gamma(s-s_{\mathfrak{p}(i)}',(\tau_{\mathfrak{p}(i)}')^{\vee} \times \sigma,\psi) \right] \cdot\gamma(s,\wt{\pi_{0}}' \times \sigma,\psi)
\end{multline}
Note that $s=1+s_t$ is a pole of either $\prod_{i=1}^{r'-(t-1)} \gamma(s+s_{\mathfrak{p}(i)}',\tau_{\mathfrak{p}(i)}' \times \tau_t,\psi)$  or $\gamma(s,\wt{\pi_0}'\times \tau_t,\psi)$.

If $s=1+s_t$ is a pole of $\gamma(s,\wt{\pi_{0}}'\times \tau_t,\psi)$, then Proposition~{\ref{temp}} (iv) implies that $s_t=0$ and $\tau_t=\tau_t^{\vee}$. 
Thus, $\gamma(s,\tau_t \times \tau_t,\psi)^2$ has a \textit{double pole} at $s=1$, while $\gamma(s,\wt{\pi_0}'\times \tau_t,\psi)$ has a simple pole at $s=1$. Again, by putting $\sigma=\tau_t$ in the equation (\ref{GD2}), we see that there exists a $1\le i \le r'-(t-1)$ such that $\gamma(s+s_{\mathfrak{p}(i)}',\tau_{\mathfrak{p}(i)}' \times \tau_t,\psi)$ has a pole at $s=1$, which implies that $s_{\mathfrak{p}(i)}'=0$ and $\tau_t=(\tau_{\mathfrak{p}(i)}')^{\vee}$. Therefore, if $s=1+s_t$ is a pole of $\gamma(s,\wt{\pi}' \times \tau_t,\psi)$, then we can cancel out $\gamma(s+s_t,\tau_t \times \sigma,\psi)\cdot \gamma(s-s_t,\tau_t^{\vee} \times \sigma,\psi)=\gamma(s+s_{\mathfrak{p}(i)}',\tau_{\mathfrak{p}(i)}' \times \sigma,\psi)\cdot \gamma(s-s_{\mathfrak{p}(i)}',(\tau_{\mathfrak{p}(i)}')^{\vee} \times \sigma,\psi)$ on both sides of the equation (\ref{GD2}).

Similarly, for later use in the proof, in case $s=1+s_t$ is a pole of $\gamma(s,\wt{\pi_{0}}'\times \tau_t,\psi)$, we call the above argument that cancels gamma factors as Argument B. 

By applying Arguments A and B iteratively, we eventually obtain the following equality:
\begin{multline}
\label{GD4}
\left[\prod_{i=t_1}^r \gamma(s+s_i,\tau_i \times \sigma,\psi)\cdot \gamma(s-s_i,\tau_i^{\vee} \times \sigma,\psi) \right] \cdot\gamma(s,\wt{\pi_{0}} \times \sigma,\psi)\\
 =\left[\prod_{i=1}^{r'-(t_1-1)} \gamma(s+s_{\mathfrak{p}_1(i)}',\tau_{\mathfrak{p}_1(i)}' \times \sigma,\psi)\cdot \gamma(s-s_{\mathfrak{p}_1(i)}',(\tau_{\mathfrak{p}_1(i)}')^{\vee} \times \sigma,\psi) \right] \cdot\gamma(s,\wt{\pi_{0}}' \times \sigma,\psi)
\end{multline} for some $t \le t_1 \le r$ and a permutation $\mathfrak{p}_1$ of $\{1,2,\cdots,r'\}$ such that $s=1+s_{t_1}$ is not a pole of $\prod_{i=1}^{r'-(t_1-1)}  \gamma(s-s_{\mathfrak{p}_1(i)}',(\tau_{\mathfrak{p}_1(i)}')^{\vee} \times \tau_{t_1},\psi)$ and $\gamma(s,\wt{\pi_{0}}'\times \tau_{t_1},\psi)$.

Since $s=1+s_{t_1}$ is a pole of 
$\gamma(s-s_{t_1},\tau_{t_1}^{\vee} \times \tau_{t_1},\psi)$, it follows that the RHS of the equation (\ref{GD4}) with $\sigma=\tau_{t_1}$ has a pole at $s=1+s_{t_1}$. By our choice of $t_1$, it must be a pole of $\prod_{i=1}^{r'-(t_1-1)} \gamma(s+s_{\mathfrak{p}_1(i)}',\tau_{\mathfrak{p}_1(i)}' \times \tau_{t_1},\psi)$. By Proposition~\ref{temp}, we have $1+s_{t_1}=1-s_{\mathfrak{p}_1(i_1)}'$ and $\tau_{t_1}=(\tau_{\mathfrak{p}_1(i_1)}')^{\vee}$ for some $1\le i_1 \le r'-(t_1-1)$. Therefore, $s_{t_1}=s_{\mathfrak{p}_1(i_1)}'=0$. If $s_{\mathfrak{p}_1(k)}'>0$ for some $1\le k \le r'-(t_1-1)$, then the LHS of the equation $(\ref{GD4})$ with $\sigma=\tau_{\mathfrak{p}_1(k)}'$ has a pole at $s=1+s_{\mathfrak{p}_1(k)}'$. However, this is impossible because $s_{t_1}=s_{t_1+1}=\cdots=s_r=0$. Therefore, $s_{\mathfrak{p}_1(i)}'=0$ for all $1\le i \le r'-(t_1-1)$ and we have:
\begin{multline}
\label{GD5}
\left[\prod_{i=t_1}^r \gamma(s,\tau_i \times \sigma,\psi)\cdot \gamma(s,\tau_i^{\vee} \times \sigma,\psi) \right] \cdot\gamma(s,\wt{\pi_{0}} \times \sigma,\psi)\\
 =\left[\prod_{i=1}^{r'-(t_1-1)} \gamma(s,\tau_{\mathfrak{p}_1(i)}' \times \sigma,\psi)\cdot \gamma(s,(\tau_{\mathfrak{p}_1(i)}')^{\vee} \times \sigma,\psi) \right] \cdot\gamma(s,\wt{\pi_{0}}' \times \sigma,\psi).
\end{multline} 
Since $\tau_{t_1}=(\tau_{\mathfrak{p}_1(i_1)}')^{\vee}$, we can remove $\gamma(s,\tau_{t_1} \times \sigma,\psi)\cdot \gamma(s,\tau_{t_1}^{\vee} \times \sigma,\psi)=\gamma(s,\tau_{\mathfrak{p}_1(i)}' \times \sigma,\psi)\cdot \gamma(s,(\tau_{\mathfrak{p}_1(i)}')^{\vee} \times \sigma,\psi)$ on both sides of the equation (\ref{GD5}). In this way, applying Arguments A, B, and the above argument, we can cancel out $\left[\prod_{i=t_1}^r \gamma(s,\tau_i \times \sigma,\psi)\cdot \gamma(s,\tau_i^{\vee} \times \sigma,\psi) \right]$ in \[\left[\prod_{i=1}^{r'-(t_1-1)} \gamma(s,\tau_{\mathfrak{p}_1(i)}' \times \sigma,\psi)\cdot \gamma(s,(\tau_{\mathfrak{p}_1(i)}')^{\vee} \times \sigma,\psi) \right]\] and we obtain
\begin{equation}
\label{GD6}
 \gamma(s,\wt{\pi_{0}} \times \sigma,\psi)=\left[\prod_{i=1}^{r'-r} \gamma(s,\tau_{\mathfrak{p}_2(i)}' \times \sigma,\psi)\cdot \gamma(s,(\tau_{\mathfrak{p}_2(i)}')^{\vee} \times \sigma,\psi) \right]\cdot\gamma(s,\wt{\pi_{0}}' \times \sigma,\psi)
\end{equation} for some permutation $\mathfrak{p}_2$ of $\{1,2,\cdots,r'\}$. If $r'>r$, then we put $\sigma=\tau_{\mathfrak{p}_2(1)}'$ in the equation (\ref{GD6}). It follows that $\gamma(s,(\tau_{\mathfrak{p}_2(1)}')^{\vee} \times \tau_{\mathfrak{p}_2(1)}',\psi)$ has a pole at $s=1$, and hence $ \gamma(s,\wt{\pi_{0}} \times \tau_{\mathfrak{p}_2(1)}',\psi)$ must also have a pole at $s=1$, which implies $\tau_{\mathfrak{p}_2(1)}'=(\tau_{\mathfrak{p}_2(1)}')^{\vee}$. However, while the RHS of the equation (\ref{GD6}) with $\sigma=\tau_{\mathfrak{p}_2(1)}'$ has at least double pole at $s=1$, $ \gamma(s,\wt{\pi_{0}} \times \tau_{\mathfrak{p}_2(1)}',\psi)$ has at most a simple pole at $s=1$. This is a contradiction, and thus we conclude that $r'=r$, completing the proof.
\end{proof}

Proposition~{\ref{equal}} immediately leads to the following corollary.
\begin{cor}\label{uniq}Let $\wt{\pi}$ be an irreducible $\mu_1'$-generic representation of $\wt{\J_n}(F)$. Then the tempered support and exponents of $\wt{\pi}$ are uniquely determined up to permutation.     
\end{cor}

We are now prepared to prove Theorem~\ref{main1}.
\begin{proof}
Assume that $\wt{\pi}$ and $\wt{\pi}'$ have tempered supports as stated in Proposition~\ref{equal}. By Proposition~\ref{gamma} (i), Theorem~\ref{sc}, and Proposition~\ref{equal}, we conclude that $\wt{\pi}_0 \simeq \wt{\pi}_0'$, which implies that $\wt{\pi}$ and $\wt{\pi}'$ have the same supports. Since an induced representation has a unique generic constituent, we have $\wt{\pi} \simeq \wt{\pi}'$.
\end{proof}

Using similar arguments employed thus far, we can establish Theorem~\ref{stab} as follows.

\begin{proof} Assume that $\wt{\pi}$ has tempered support $(\tau_1,\cdots,\tau_r;\wt{\pi}_0)$ and exponents $(s_1,\cdots,s_r)$, and $\wt{\pi}'$ has tempered support $(\tau_1',\cdots,\tau_{r'}';\wt{\pi}_0')$ and exponents $(s_1',\cdots,s_{r'}')$. Consider the irreducible generic representation $\pi$ of $\H_n^+(F)$, which has temepred support $(\tau_1,\cdots,\tau_r;\Theta_{\psi}(\wt{\pi}_0))$ and exponents $(s_1,\cdots,s_r)$, and $\pi'$ of $\H_n^+(F)$, which has support $(\tau_1',\cdots,\tau_{r'}';\Theta_{\psi}(\wt{\pi}_0'))$ and exponents $(s_1',\cdots,s_{r'}')$.

By \cite[Theorem 4.1]{CKPSS04}, \cite[Theorem]{CP}, and \cite[Theorem 1.4]{L15}, there exists an integer $l_0 = l(\pi,\pi') > 0$ such that for any quasi-character $\chi$ of $F^\times$ with $\text{cond}(\chi) > l_0$,
\[ \gamma(s,\pi \times \chi,\psi)=\gamma(s,\pi' \times \chi,\psi).\]

\noindent Furthermore, by Theorem~\ref{Theta}, we know that
\[ \gamma(s,\wt{\pi_0} \times \chi,\psi)=\gamma(s,\Theta_{\psi}(\wt{\pi_0}) \times \chi,\psi), \quad \gamma(s,\wt{\pi_0}' \times \chi,\psi)=\gamma(s,\Theta_{\psi}(\wt{\pi_0}') \times \chi,\psi).\]
Therefore, 
\[ \gamma(s,\wt{\pi} \times \chi,\psi)=\gamma(s,\pi \times \chi,\psi), \quad \gamma(s,\wt{\pi}' \times \chi,\psi)=\gamma(s,\pi' \times \chi,\psi).\]
By taking $l = l_0$, this completes the proof.
\end{proof}
\begin{rem}Indeed, the proof of Theorem~\ref{stab} does not essentially require tempered supports (i.e., $\wt{\pi}_0$ and $\wt{\pi}_0'$ being tempered), but supercuspidal support is sufficient.
\end{rem}
\newpage
\section{The rigidity theorem for $\Mp_{2n}(\A)$}
Let \( K \) be a number field and \( \A \) its ad\`{e}le ring. Let \( \WW_{n} \) be a symplectic space over \( K \) of dimension \( 2n \), and put \( \Mp_{2n} \coloneqq \Mp(\WW_n) \). We prove the rigidity theorem for \( \Mp_{2n}(\A) \) using the global theta lifts between metaplectic groups and odd special orthogonal groups.

Let \( \Psi = \otimes_v \Psi_v \) be a non-trivial additive character of \( K \backslash \A \), and let \( \mf{U}' \) be the generic character of \( \U'(K) \backslash \U'(\A) \) associated to \( \Psi \), defined similarly to \( \mu_{1}' \) in the local case.

Now we are ready to prove Theorem~\ref{rigid}.

\begin{proof}[Proof of Theorem~\ref{rigid}]
Choose \( \{\VV_{m}\}_{m \ge 1} \) a Witt tower of split quadratic spaces over \( K \) such that the dimension of \( \VV_{m} \) over \( K \) is \( 2m + 1 \). Put \( \SO_{2m+1} \coloneqq \SO(\VV_m) \). For each \( 1 \le i \le 2 \), consider the tower of global theta lifts \( \{ \Theta_{\Psi^{-1},\VV_{m},\WW_n}(\wt{\pi}_i) \}_{m \ge 1} \) of \( \wt{\pi}_i \) to \( \{\SO_{2m+1}(\A)\}_{m \ge 1} \) with respect to \( \Psi^{-1} \). Let \( n_i \) be the minimal integer \( k \) such that \( \Theta_{\Psi^{-1},\VV_{k},\WW_n}(\wt{\pi}_i) \) is nonzero. By \cite[Proposition~3]{Fu}, \( \Theta_{\Psi^{-1},\VV_{n},\WW_n}(\wt{\pi}_i) \) is nonzero and globally generic, so \( n_i \le n \). By Rallis's tower property, \( \Theta_{\Psi^{-1},\VV_{n_i},\WW_n}(\wt{\pi}_i) \) is cuspidal and hence irreducible (see \cite[Proposition~3.1]{Gan1}). Put \( \pi_i = \Theta_{\Psi^{-1},\VV_{n_i},\WW_n}(\wt{\pi}_i) \). Then for any place \( v \) of \( K \), \( \pi_{i,v} \) is the non-zero local theta lift \( \theta_{\Psi_v^{-1},(\VV_{n_i})_v,(\WW_n)_v}(\wt{\pi}_{i,v}) \) of \( \wt{\pi}_{i,v} \). If \( n_i < n-1 \), this is impossible by \cite[Corollary~2.2 (2)]{JS03}. Therefore, \( n_i \ge n-1 \), hence \( n_i = n-1 \) or \( n_i = n \). 

If \( n_i = n \), then \( \pi_i \) is globally generic and cuspidal. Suppose \( n_i = n-1 \). Since \( \wt{\pi}_i \) is irreducible, cuspidal, and \( \mf{U}' \)-generic, by \cite[Theorem~1.1]{JS07}, the global theta lift \( \Theta_{\Psi,\WW_n,\VV_{n_i}}(\pi_i) \) of \( \pi_i \) to \( \Mp_{2n}(\A) \) with respect to \( \Psi \) is \( \wt{\pi}_i \). Since \( \wt{\pi}_i \) is \( \mf{U}' \)-generic, by the relation of Whittaker–Fourier coefficients of \( \pi_i \) and \( \wt{\pi}_i = \Theta_{\Psi,\WW_n,\VV_{n_i}}(\pi_i) \) in \cite[(4.1), p.~744]{JS07}, \( \pi_i \) is globally generic. Therefore, even when \( n_i = n-1 \), \( \pi_i \) is globally generic and cuspidal.

Suppose \( n_1 \ne n_2 \). Choose a finite place \( v \) of \( K \) such that \( \wt{\pi}_{1,v} \simeq \wt{\pi}_{2,v} \) and both are unramified. For each \( 1 \le i \le 2 \), write \( \pi_{i,v} \) as a subquotient of an induced representation
\[
\Ind_{\P_{n_i}(K_v)}^{\SO_{2n_i+1}(K_v)}
(\chi_{i,1}|\det|_{K_v}^{s_{i,1}} \otimes \cdots \otimes \chi_{i,n_i}|\det|_{K_v}^{s_{i,n_i}}),
\]
where \( \P_{n_i} \) is a Borel subgroup of \( \SO_{2n_i+1} \) with Levi subgroup isomorphic to \( \GL_{1} \times \cdots \times \GL_{1} \), and for all \( 1 \le k \le n_i \), \( \chi_{i,k} \) are unitary unramified characters of \( \GL_1(K_v) \), and \( s_{i,k} \ge 0 \) are real numbers. Note that \( 0 \le s_{i,k} < \frac{1}{2} \) by the non-trivial Ramanujan bound \cite[Corollary~10.1]{CKPSS}.

Since \( \theta_{\Psi_v^{-1},(\VV_{n_i})_v,(\WW_n)_v}(\wt{\pi}_{i,v}) = \pi_{i,v} \), we may assume \( n_1 = n \), \( n_2 = n - 1 \). By Theorem~\ref{ksc}, \( \wt{\pi}_{1,v} \) (resp.~\( \wt{\pi}_{2,v} \)) is a subquotient of a unitary induced representation
\[
\Ind_{\cl{\Q_n}(K_v)}^{\Mp_{2n}(K_v)}
((\chi_{1,1}|\det|_{K_v}^{s_{1,1}})_\psi \otimes \cdots \otimes (\chi_{1,n}|\det|_{K_v}^{s_{1,n}})_\psi),
\]
\[
(\text{resp. } \Ind_{\cl{\Q_n}(K_v)}^{\Mp_{2n}(K_v)}
((|\det|_{K_v}^{1/2})_\psi \otimes (\chi_{2,1}|\det|_{K_v}^{s_{2,1}})_\psi \otimes \cdots \otimes (\chi_{2,n-1}|\det|_{K_v}^{s_{2,n-1}})_\psi),)
\]
where \( \cl{\Q_k} \) is a Borel subgroup of \( \Mp_{2k} \) with Levi subgroup isomorphic to \( \cl{\GL}_{1} \times_{\{\pm 1\}} \cdots \times_{\{\pm 1\}} \cl{\GL}_{k} \). However, since all exponents \( s_{i,j} \) are less than $\frac{1}{2}$, it contradicts the assumption \( \wt{\pi}_{1,v} \simeq \wt{\pi}_{2,v} \). The case \( n_1 = n - 1 \), \( n_2 = n \) is similar. Hence \( n_1 = n_2 \).

 Since \( \wt{\pi}_{1,v} \simeq \wt{\pi}_{2,v} \) for almost all places \( v \) of $K$, we have \( \pi_{1,v} \simeq \pi_{2,v} \) for almost all places \( v \) of $K$. By the rigidity theorem for $\SO_{2n_i+1}(\A)$ (see \cite[Theorem~5.3]{JS03}), we have \( \pi_1 \simeq \pi_2 \). Theorem~{\ref{howe}} asserts the injectivity of the global theta correspondence; hence, we deduce \( \wt{\pi}_1 \simeq \wt{\pi}_2 \), completing the proof.
\end{proof}

\subsection*{Acknowledgements} The author would like to thank Hiraku Atobe and Eyal Kaplan for their valuable comments. He also extends heartfelt thanks to Dongho Byeon, Youngju Choie, Youn-Seo Choi, Wee Teck Gan, Haseo Ki, and Sug Woo Shin for their unwavering support and encouragement over the years. The author is grateful to KAIST, Yonsei University, and Catholic Kwandong University for their excellent research environment and generous support. Finally, he is deeply thankful to the referee for detailed and insightful suggestions, which undoubtedly improved the exposition of this paper.

This work is supported by the National Research Foundation of Korea (NRF) grant funded by the Korean government (MSIT) under grant number 2020R1F1A1A01048645 and RS-2023-00237811.

\subsection*{Data availability} No datasets were generated or analyzed during the current study.

\subsection*{Conflict of interest}
The author declares that there is no conflict of interest.

\providecommand{\bysame}{\leavevmode\hbox to3em{\hrulefill}\thinspace}

\end{document}